\newcommand{\cC}{\mathcal{C}}
\newcommand{\CC}{\mathbb{C}}
\newcommand{\ZZ}{\mathbb{Z}}
\newcommand{\NN}{\mathbb{N}}
\newcommand{\cL}{\mathcal{L}}
\newcommand{\g}{\mathfrak{g}}
\newcommand{\End}[1]{\mathrm{End}(#1)}
\newcommand{\Hom}[2]{\mathrm{Hom}(#1,#2)}
\newtheorem{thm}{Theorem}[section]
\newtheorem{cor}[thm]{Corollary}
\newtheorem{exa}[thm]{Examples}
\newtheorem{lem}[thm]{Lemma}
\newtheorem{defn}[thm]{Definition}
\newtheorem{prop}[thm]{Proposition}
\newtheorem{question}[thm]{Question}
\title{On the notion of 'retractable modules' in the context of algebras.}
\author{Christian Lomp}
\address{Department of Mathematics, FCUP, University of Porto, Rua Campo Alegre 687, 4169-007 Porto, Portugal}
\email{clomp@fc.up.pt}
\dedicatory{dedicated to Patrick and John on the occasion of their 70th birthdays}
\thanks{This research was funded by the European Regional Development Fund through the programme COMPETE and by the Portuguese Government through the FCT - Funda\c{c}\~{a}o para a Ci\^encia e a Tecnologia under the project PEst-C/MAT/UI0144/2011.}
\begin{document}
\maketitle
\begin{abstract}
This is a survey on the usage of the module theoretic notion of a ``retractable module" in the study of algebras with actions. We explain how classical results can be interpreted using module theory and end the paper with some open questions.
\end{abstract}

\section{Introduction}
This note is written for module theorists and intends to show where the module theoretic notion of a retractable module plays a role in the context of algebras with certain additional structure. These "additional structures" include group actions, involutions, Lie algebra actions or more generally Hopf algebra actions as well as the bimodule structure of the algebra (and combinations of all these).
Such algebra $A$ is usually a subalgebra of a larger algebra $B$ and has the structure of a cyclic left $B$-module, while its endomorphism ring $\mathrm{End}_B(A)$ is isomorphic to a subalgebra $A^B$ of $A$. In this intrinsic situation the condition on $A$ to be a retractable $B$-module means that the subring $A^B$ has non-zero intersection with all non-zero left ideals of $A$ that are stable under the module action of $B$. We will first recall the module theoretical notion of a retractable module and set it in a categorical and lattice theoretical context. In the second section we will examine various situations of algebras $A$ with additional structures and recall many classical theorems that can be expressed in terms of module theory. The last section deals with open problems around retractable modules in the context of algebras. Note that all rings/algebras are considered to be associative and unital. Modules are usually meant to be left modules and homomorphisms are acting from the right.

\section{Module Theory}

A retractable module is a (left) $A$-module $M$, over some ring $A$, such that there exist non-zero homomorphisms from $M$ into each of its non-zero submodules. The notion of a retractable module appeared first in the work of Khuri \cite{Khuri5} and had  since then been used in connection with  primness conditions and the nonsingularity of a module and its endomorphism ring  (see \cite{Khuri,Khuri2,Khuri3,Khuri4,Zelmanowitz_closed}).
 One of Khuri's result is the establishment of a bijective correspondence between closed submodules of a module $M$ and closed left ideals of its endomorphism ring $S=\mathrm{End}_A(M)$ in case $M$ is non-degenerated (see \cite{Khuri0, Zelmanowitz_closed, Zhou}). A module is non-degenerated if its standard Morita context is non-degenerated. Recall that a Morita context between two rings $A$ and $S$ is a quadruple $(A,M,N,S)$ where ${_AM}_S$ 
 and ${_SN}_A$ are bimodules with bimodule maps $(-,-): M {\otimes_S} N \rightarrow A$ and $[-,-]:N{\otimes_A} M \rightarrow S$ satisfying 
$m[n,m'] = (m,n)m'$ and $n(m,n') = [n,m]n'$ for all $m,m'\in M$non-degenerated and $n,n'\in N$. The context is called non-degenerated if $M_S$ is faithful and for all $0\neq m \in M$ also $[N,m]\neq 0$. The standard Morita context of a left $A$-module $M$ is the context $(A,M,M^\ast, S)$ with $S=\mathrm{End}_A(M)$ and $M^\ast=\mathrm{Hom}_A(M,A)$ and the maps
\begin{equation}
(-,-): M {\otimes_S}  M^\ast  \rightarrow A \qquad (m,f) := (m)f \qquad \forall m\in M, f\in M^\ast
\end{equation}
\begin{equation}
[-,-]: M^\ast {\otimes_A} M \rightarrow S \qquad [f,m]  := [n \mapsto (n)f m] \qquad \forall m\in M, f\in M^\ast
\end{equation}
$M_S$ is obviously always faithful and $M$ is non-degenerated if and only if $[M^*, m] \neq 0$ for all $0\neq m\in M$. In this case there exist for any non-zero submodule $N$ of $M$  and non-zero element $m\in N$ a homomorphism $f:M\rightarrow A$ such that the map $\tilde{f}:M\rightarrow N$ with 
$(n)\tilde{f} = (n)fm\in Am\subseteq N$, for $n\in M$, is non-zero. Hence any non-degenerated module is in particular retractable.

Retractable modules have gained recently further attention in \cite{SmithVedadi, SmithVedadi2, HaghanyVedadi, HaghanyKaramzadehVedadi, lomp_prime,TolooeiVedadi, Tamer}, but have previously also played a major role in the context of algebras. The theorems of Bergman-Isaacs or Rowen say that in certain situations an algebra $A$ with a group action $G$ or considered as bimodule is a retractable module considered over the skew group ring $A\ast G$ or over the enveloping algebra $A^e=A\otimes A^{op}$. In case $\partial$ is a derivation on an algebra $A$, then $A$ is a retractable $A[x;\partial]$-module if $\partial$ is locally nilpotent. Furthermore Cohen's question raises the problem as to whether a semiprime algebra $A$ with an action of a semisimple Hopf algebra $H$ is a retractable $A\# H$-module.
With this in mind, the survey was written to illustrate the use of the module theoretic notion of retractability in the context of algebras.

\subsection{Categorical notions}
A retractable module is clearly a generalisation of a self-generator. Here we shortly review this notion in the context of category theory.

\begin{defn}
Let $\cC$ be a category. An object $X$ of $\cC$ is \emph{generated} by an object $G$ of $\cC$ if 
for every pair  of distinct morphisms $f, g : X \rightarrow Y$ in $\cC$ there exists a morphism $h: G \rightarrow X$ with $hf\neq hg$.
\end{defn}
In particular if $\cC$ is an Abelian category and $X$ is not the zero object, then $\mathrm{Mor}(G,X)\neq \{0\}$, because for the identity $f=id_X$ and the zero morphism $g=0$, there exist a morphism $h:G\rightarrow X$ such that $h\neq 0$. Having this in mind the definition of a retractable object seems to be a direct generalisation of a generator.

\begin{defn}
An object $M$ of an Abelian category $\cC$ is called \emph{retractable} if $\mathrm{Mor}(M,N)\neq \{0\}$ for all subobjects $N$ of $M$, different from the zero object.
\end{defn}

Ler  $\cC$ be an Abelian category with arbitrary coproducts. Let $M$ be any object in $\cC$ and $N$ a subobject of it.
Then there exists a unique subobject $\mathrm{Tr}(M,N)$ of  $N$ such that every morphism $f:M\rightarrow N$ factors through $\mathrm{Tr}(M,N)$.
Suppose that $M$ is a retractable object, then $\mathrm{Tr}(M,N)$ is essential in $N$ for each non-zero subobject $N\in \cL$ in the sense that for all non-zero subobjects $K$  of $N$ the meet  $K\cap \mathrm{Tr}(M,N)$ is non-zero (as any $f:M\rightarrow K$ can be considered a morphism $f:M\rightarrow N$ and hence factored through $\mathrm{Tr}(M,N)$). This means in the case of a module category $\cC$, that a module $M$ is retractable if and only if for all submodules $N$ of $M$, the trace $\mathrm{Tr}(M,N)$, which is the sums of images of all homomorphisms $f:M\rightarrow N$, is essential in $N$. Loosely speaking every submodule of a  retractable module $M$ can be "approximated" by an $M$-generated submodule.

\subsection{Lattice theoretical meaning}
Let $R$ be ring and $M$ a left $R$-module with endomorphism ring $S=\End{M}$.
To link module theoretical properties of $M$ with properties of $S$ one can use the following map from the lattice $\cL(M)$ of left $R$-submodules of $M$ to  the lattice $\cL(S)$ of left ideals of $S$:

$$ \Hom{M}{-}:\cL(M)\rightarrow \cL(S) \qquad N \mapsto \Hom{M}{N}=\{f\in S\mid (M)f\subseteq N\}.$$

This map $\Hom{M}{-}$ is always a homomorphism of semilattices between $(\cL(M),\cap)$ and $(\cL(S),\cap)$ since $\Hom{M}{N\cap L}=\Hom{M}{N}\cap \Hom{M}{L}$ holds for all $N,L\in \cL(M)$. Call a homomorphism $\varphi:\cL\rightarrow \cL'$ of semilattice with least element $0$ faithful if $\varphi(x)=0 \Rightarrow x=0$. The following Lemma can be proven easily:

\begin{lem} Let $M$ be a left $A$-module.
\begin{enumerate}
\item $\Hom{M}{-}$ is injective if and only if $M$ is a self-generator.
\item $\Hom{M}{-}$ is faithful if and only if $M$ is a retractable module.
\end{enumerate}
\end{lem}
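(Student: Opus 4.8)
The plan is to work throughout with the two basic identifications that make the map $\Hom{M}{-}$ transparent. First, under the standing convention that homomorphisms act on the right, $\Hom{M}{N}=\{f\in S\mid (M)f\subseteq N\}$ is genuinely a left ideal of $S$ (if $g\in S$ and $f\in\Hom{M}{N}$, then $(M)(gf)=((M)g)f\subseteq (M)f\subseteq N$), and it is canonically identified with the set of module maps $M\to N$: such an $f$ is one such map, and conversely any map $M\to N$ followed by the inclusion $N\hookrightarrow M$ lands in $S$. Consequently the trace of $M$ in $N$ is recovered as $\mathrm{Tr}(M,N)=(M)\Hom{M}{N}=\sum_{f\in\Hom{M}{N}}(M)f$, and $\Hom{M}{-}$ is order preserving (indeed the meet-preservation $\Hom{M}{N\cap L}=\Hom{M}{N}\cap\Hom{M}{L}$ is already recorded above). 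These facts reduce the whole statement to elementary bookkeeping.

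For (2) I would simply unwind the definition of faithfulness. The least elements of $\cL(M)$ and $\cL(S)$ are the zero submodule and the zero left ideal, so $\Hom{M}{-}$ being faithful means precisely that $\Hom{M}{N}=0$ forces $N=0$. Taking the contrapositive, this says that every non-zero submodule $N$ admits a non-zero $f\in S$ with $(M)f\subseteq N$, i.e. a non-zero homomorphism $M\to N$ — which is exactly the definition of a retractable module. No further argument is needed here.

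For (1) the key observation, and the one step that is not pure formalism, is the identity $\Hom{M}{N}=\Hom{M}{\mathrm{Tr}(M,N)}$, valid for every submodule $N$. The inclusion $\supseteq$ is monotonicity applied to $\mathrm{Tr}(M,N)\subseteq N$; for $\subseteq$, any $f\in\Hom{M}{N}$ has $(M)f\subseteq\mathrm{Tr}(M,N)$ by the very definition of the trace, so $f\in\Hom{M}{\mathrm{Tr}(M,N)}$. Granting this, both directions fall out. If $M$ is a self-generator then $N=\mathrm{Tr}(M,N)$ for all $N$; so from $\Hom{M}{N}=\Hom{M}{L}$ I obtain $\mathrm{Tr}(M,N)=(M)\Hom{M}{N}=(M)\Hom{M}{L}=\mathrm{Tr}(M,L)$, hence $N=L$, proving injectivity. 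Conversely, if $\Hom{M}{-}$ is injective, then the identity above forces $\mathrm{Tr}(M,N)=N$ for every $N$ (both have the same image under the injective map $\Hom{M}{-}$), which is precisely the self-generator condition.

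I expect no genuine obstacle in this proof; the only place demanding care is handling the left/right conventions correctly, so that $\Hom{M}{N}$ really is a left ideal of $S$ and that $(M)\Hom{M}{N}$ really equals the trace. Once this dictionary between left ideals of $S$ and $M$-generated submodules of $M$ is set up, parts (1) and (2) are, as the statement advertises, proven easily.
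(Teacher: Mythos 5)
Your proof is correct, and since the paper itself omits the argument (stating only that the Lemma ``can be proven easily''), your elementary unwinding is precisely the intended one: part (2) is the literal contrapositive of faithfulness, and part (1) rests on the identity $\Hom{M}{N}=\Hom{M}{\mathrm{Tr}(M,N)}$ together with $\mathrm{Tr}(M,N)=(M)\Hom{M}{N}$, both of which you verify correctly. Your care with the right-action convention (so that $\Hom{M}{N}$ is indeed a left ideal of $S$) is exactly the bookkeeping the paper's phrasing presupposes.
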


While the injectivity or faithfulness of $\Hom{M}{-}$ has to do with $M$ being a generator or retractable, the surjectivity of $\Hom{M}{-}$ deals with the projectivity of $M$ (we refer the reader to \cite{wisbauer96} for all undefined notion.):

\begin{lem} Let $M$ be a left $A$-module and $S=\mathrm{End}_A(M)$.
\begin{enumerate}
\item All cyclic left ideals of $S$ belong to the range of $\Hom{M}{-}$ if and only if $M$ is semi-projective.
\item All finitely generated left ideals of $S$ belong to the range of $\Hom{M}{-}$ if and only if $M$ is 
intrinsically-projective.
\item $\Hom{M}{-}$ is surjective if $M$ is $\Sigma$-self-projective, i.e. projective in the Wisbauer category $\sigma[M]$.
\end{enumerate}
\end{lem}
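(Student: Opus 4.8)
The plan is to handle all three parts through one mechanism: the map $\Hom{M}{-}$ is inclusion-preserving, and a generator lying in a left ideal of the special shape $\Hom{M}{N}$ forces its image into $N$. Throughout I write $MI:=\sum_{f\in I}(M)f$ for a left ideal $I\subseteq S$, and I use the trace descriptions of the projectivity notions (see \cite{wisbauer96}): $M$ is semi-projective iff $\Hom{M}{(M)f}=Sf$ for every $f\in S$, and intrinsically projective iff $\Hom{M}{MI}=I$ for every finitely generated $I$. The one inclusion I would record at the outset is that $I\subseteq\Hom{M}{MI}$ always holds, since $(M)f\subseteq MI$ for each $f\in I$; dually $Sf\subseteq\Hom{M}{(M)f}$ always holds because $(M)(gf)=((M)g)f\subseteq(M)f$.

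For part (1), the ``if'' direction is immediate from semi-projectivity: the cyclic ideal $Sf$ equals $\Hom{M}{(M)f}$, hence lies in the range with witness $N=(M)f$. For the converse I would use the membership trick: if $Sf=\Hom{M}{N}$ for some submodule $N$, then $f\in Sf$ gives $(M)f\subseteq N$, so monotonicity yields $\Hom{M}{(M)f}\subseteq\Hom{M}{N}=Sf$; together with the automatic reverse inclusion this gives $\Hom{M}{(M)f}=Sf$, i.e. semi-projectivity. Part (2) is the exact analogue: given a finitely generated ideal $I=Sf_1+\cdots+Sf_n$ in the range, say $I=\Hom{M}{N}$, each $f_i\in I$ forces $(M)f_i\subseteq N$, whence $MI\subseteq N$ and $\Hom{M}{MI}\subseteq\Hom{M}{N}=I\subseteq\Hom{M}{MI}$; conversely intrinsic projectivity directly exhibits $I=\Hom{M}{MI}$.

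For part (3) I would take an arbitrary left ideal $I\subseteq S$, set $N:=MI$, and try to prove $\Hom{M}{MI}=I$, which places $I$ in the range and so gives surjectivity of $\Hom{M}{-}$. Only the inclusion $\Hom{M}{MI}\subseteq I$ requires work. Here I would use that $M$ is projective in $\sigma[M]$: the endomorphisms $f\in I$ assemble into an epimorphism $\phi:M^{(I)}\to MI$ sending $(m_f)_f$ to $\sum_f(m_f)f$, and since $M^{(I)}$ and $MI$ both lie in $\sigma[M]$, any $h\in\Hom{M}{MI}$, read as a map $h:M\to MI$, lifts through $\phi$ to some $g=(g_f)_f:M\to M^{(I)}$ with $g\phi=h$. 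Comparing components gives $h=\sum_f g_f f$ with each $g_f f\in Sf\subseteq I$.

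The main obstacle is the final membership $h\in I$: a priori the identity $h=\sum_f g_f f$ holds only pointwise on $M$, and the lift $g$ need not have finite support, so this expression is not visibly a finite $S$-combination of elements of $I$. The way I would close the gap is to reduce to part (2) (noting that $\Sigma$-self-projectivity implies intrinsic projectivity, so its characterisation is available): it suffices to find a finitely generated subideal $F=Sf_1+\cdots+Sf_n\subseteq I$ with $(M)h\subseteq MF$, for then $h\in\Hom{M}{MF}=F\subseteq I$. Thus the crux I would concentrate on is showing that the image $(M)h$ is covered by the images $(M)f$ coming from only finitely many $f\in I$; establishing this finite coverage is the genuinely delicate point of the lemma, and everything else is formal.
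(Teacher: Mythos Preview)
The paper gives no proof of this lemma, only a reference to \cite{wisbauer96} for the definitions. Your treatment of (1) and (2) is correct and is the natural one: once a left ideal $I$ lies in the range, say $I=\Hom{M}{N}$, one has $MI\subseteq N$ and hence $\Hom{M}{MI}\subseteq\Hom{M}{N}=I$, while the reverse inclusion is automatic; combined with the trace characterisations this is exactly semi- resp.\ intrinsic projectivity.

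Your argument for (3), however, stops at an acknowledged gap, and that gap is real rather than merely technical. The ``finite coverage'' you propose to establish can fail outright. Take $A=k$ a field and $M=k^{(\mathbb{N})}$, so that $\sigma[M]$ is all of $k$-Mod and $M$, being free, is certainly projective there. Let $I\subseteq S=\mathrm{End}_k(M)$ be the left ideal of finite-rank endomorphisms. Then $MI=M$, so $\Hom{M}{MI}=S\neq I$, and $I$ is not in the range of $\Hom{M}{-}$; in particular for $h=\mathrm{id}_M$ there is no finitely generated $F\subseteq I$ with $(M)h\subseteq MF$, so your reduction to (2) cannot succeed and no other argument will either. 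The missing hypothesis that makes your lifting argument go through is that $M$ be finitely generated: then any lift $g:M\to M^{(I)}$ factors through a finite sub-sum $M^{(F)}$ (the images of a finite generating set have jointly finite support), and one reads off $h=\sum_{f\in F}g_ff\in I$ directly, without any appeal to (2). In every application in the paper this extra hypothesis is automatic, since there $M=A$ is a cyclic left $B$-module.
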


Considering semi-projective retractable modules combines the light self-generator and self-projectivity condition on $M$. Such modules were studied for example in \cite{HaghanyVedadi}.

\section{Algebras with additional structures}
An (associative, unital) algebra $A$ over a commutative ring $R$ is an $R$-module $A$ such that there exist $R$-linear maps
$$\mu: A{\otimes_R} A\rightarrow A \qquad \mbox{ and } \qquad \eta:R\rightarrow A,$$
called the multiplication of $A$ and unit of $A$ respectively, such that $A$ with $\mu$ as multiplication, defined as $ab=\mu(a\otimes b)$ for $a,b\in A$, and $1_A=\eta(1)$ as unit element forms an associative and unital  ring. 

It is easy to see that by taking  $R=\ZZ$, any (associative, unital) ring is an (associative, unital) algebra over $\ZZ$.  Thus for those that do not like the idea of $R$-algebras, they might for the beginning just ignore $R$ and think of $A$ being and ordinary ring.
Clearly $\eta$ does not need to be injective, just think of $A=\ZZ_n [x]$, for some $n>1$, which is a $\ZZ$-algebra and $\eta:\ZZ\rightarrow \ZZ_n \subseteq \ZZ_n [x]$ is the canonical map, where $\ZZ_n = \ZZ/n\ZZ$. Moreover the image of $\eta$ lies always in the centre of $A$ and in particular $A$ is an $R'$-algebra for any subring $R'$ of the centre $Z(A)=\{a\in A\mid ab=ba\;\; \forall b\in A\}.$ In the following let $R$ be always a commutative ring and $A$ an $R$-algebra.

\subsection{Algebras that are retractable as bimodule}\label{bimodule}
The endomorphism ring $\mathrm{End}_R(A)$ of $A$ as $R$-module is itself an $R$-algebra whose $R$-module structure is given as follows: for all $r\in R, f\in \mathrm{End}_R(A)$ set $r\cdot f:A\rightarrow A$ by $(r\cdot f)(x):=rf(x)$ for all $x\in A$. The multiplication of $\mathrm{End}_R(A)$ is given by the composition of functions and the unit  map is given by $\eta:R\rightarrow \mathrm{End}_R(A)$ sending $r\mapsto r\cdot \mathrm{id}_A$. 

For each element $a\in A$ there are two $R$-linear maps of $A$ which are the left and the right multiplication by $a$:
$$l_a: A\rightarrow A \qquad l_a(x):= ax \qquad \forall x\in A,$$
$$r_a: A\rightarrow A \qquad r_a(x):= xa \qquad \forall x\in A.$$
Note that since the multiplication of $A$ is supposed to be associative,  $l_a$ and $r_b$ commute, i.e. $l_a\circ r_b=r_b\circ l_a$ in $\mathrm{End}_R(A)$, for any $a,b\in A$. The subalgebra of $\mathrm{End}_R(A)$ generated by all maps $l_a$ and $r_b$ for $a,b\in A$. Is called the multiplication algebra of $A$ and denoted by $M(A)$. 

Left $M(A)$-modules $M$ can be considered as bimodules over $A$, where one defines for all $a,b\in A$ and $m\in M$:
$$ a m := l_a\bullet m \qquad \mbox{ and } \qquad m b := r_b\bullet m.$$
The bimodule compatibility condition $(am)b = a (mb)$ for all $m\in M$ holds, because of 
$(r_b\circ l_a -  l_a\circ r_b)\bullet M = 0$. Analogously any $A$-bimodule has a natural structure as left $M(A)$-module given by $l_a\bullet m = am$ and $r_b\bullet m = mb$, for $a,b\in A$ and $m\in M$.
The enveloping algebra $A^e=A {\otimes_R} A^{op}$ is also an $R$-algebra, where $A^{op}$ denotes the opposite ring of $A$. The multiplication of $A^e$ is defined as 
$$(a\otimes x)(b\otimes y) := (ab) \otimes (yx) \qquad \forall a,b,x,y \in A.$$
Moreover the map $\psi: A^e \rightarrow \mathrm{End}_R(A)$ given by  $$ a\otimes b \mapsto l_a\circ r_b \qquad \forall a,b\in A$$
is a surjective algebra map from $A^e$ to $M(A)$ whose kernel is the annihilator of $A$, where $A$ is naturally considered a left $A^e$-module by $(a\otimes b) \bullet x = axb$ for all $a,b,x\in A$. Hence 
$$\mathrm{Ker}(\psi) = \left\{ \sum_{i=1}^n a_i\otimes b_i \in A^e \mid \sum_{i=1}^n a_ixb_i = 0 \qquad \forall x\in A\right\} = \mathrm{Ann}_{A^e}(A).$$

For a bimodule $M\in  A^e$-Mod one defines its centre as  $$ Z(M) = \{ m\in M \mid am=ma \:\:\forall a\in A\}.$$
The canonical map 
\begin{equation}\label{psi_bi}
\psi_M: \mathrm{Hom}_{A^e}(A,M) \longrightarrow Z(M) \qquad \mbox{ given by } \qquad f \mapsto  (1)f \qquad \forall f\in\mathrm{Hom}_{A^e}(A,M).
\end{equation}
is a bijection, where the left $A^e$-module homomorphism is applied from the right. The inverse of this map is 
\begin{equation}
\psi_M^{-1}: Z(M) \longrightarrow \mathrm{Hom}_{A^e}(A,M) \qquad \mbox{ given by } \qquad m \mapsto [a\mapsto a\cdot m] \qquad \forall m\in Z(M).
\end{equation}
In particular $\psi_A:\mathrm{End}_{A^e}(A) \simeq Z(A)$ is an isomorphism of $R$-algebras and the bijections $\psi_M$ are left $Z(A)$-linear maps.

\begin{lem} $A$ is a retractable left $A^e$-module if and only if $A$ has a large centre $Z(A)$, i.e. every non-zero two-sided ideal of $A$ contains a non-zero central element.
\end{lem}
\begin{proof} This follows from the the fact that the $A^e$-submodules of $A$ are precisely the two-sided ideals $I$ and from the bijection
$$\psi_I: \mathrm{Hom}_{A^e}(A,I) \simeq Z(I) = I\cap Z(A).$$
\end{proof}

 There are at least two important results that have to be mentioned in this context. The first is due to Rowen and says the following (see \cite{Rowen}):

\begin{thm}[Rowen, 1972]\label{RowenTheorem}
Any semiprime PI algebra has a large centre.
\end{thm}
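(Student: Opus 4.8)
The plan is to prove the theorem through the equivalent formulation supplied by the Lemma just stated: it suffices to show that every nonzero two-sided ideal $I$ of a semiprime PI algebra $A$ contains a nonzero central element. Fix such an $I$. The decisive tool will be the theory of \emph{central polynomials}: by the theorem of Formanek and Razmyslov there exists, for each $n\in\NN$, a multilinear polynomial $c_n$ with zero constant term which is a central polynomial for $n\times n$ matrices --- its values on $M_n$ are scalar but not identically zero --- and which is moreover a polynomial identity of $M_m$ for every $m<n$. Because $c_n$ has no constant term and is multilinear, substituting elements of $I$ into all of its variables yields an element of $I$; the whole difficulty is then to arrange that this element is simultaneously \emph{central in all of $A$} and \emph{nonzero}.

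For the centrality I would exploit that a semiprime ring is a subdirect product of its prime factors $A/Q$, with $Q$ ranging over the prime ideals and $\bigcap_Q Q=0$. Since $A$ satisfies an identity of some degree $d$, Kaplansky's theorem bounds the PI-degree of every prime factor $A/Q$ by $\lfloor d/2\rfloor$, so I may set $n$ to be the maximal PI-degree occurring among those primes $Q$ with $I\not\subseteq Q$. For an arbitrary prime $Q$ and arbitrary $a_1,\dots,a_k\in I$ one then checks that $c_n(a_1,\dots,a_k)$ is central in $A/Q$: if $I\subseteq Q$ the value is already $0$; if $I\not\subseteq Q$ the factor $A/Q$ has PI-degree $m\le n$, and either $m<n$, so $c_n$ is an identity of $A/Q$ and the value is $0$, or $m=n$, so $c_n$ is central-valued on $A/Q$. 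Hence $[c_n(a_1,\dots,a_k),b]\in Q$ for every $b\in A$ and every prime $Q$, and as $\bigcap_Q Q=0$ this commutator vanishes, i.e. $c_n(a_1,\dots,a_k)\in I\cap Z(A)$.

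It remains to choose the arguments in $I$ so that the value is nonzero, and this is the step I expect to be the main obstacle, since it is where semiprimeness and the \emph{prime} structure theory must be combined. By the choice of $n$ there is a prime $Q_0$ with $I\not\subseteq Q_0$ and $A/Q_0$ of PI-degree exactly $n$; by Posner's theorem $A/Q_0$ has a simple ring of quotients of dimension $n^2$ over its central field, on which $c_n$ assumes a nonzero scalar value. The image $(I+Q_0)/Q_0$ is a nonzero ideal of the prime ring $A/Q_0$, so after clearing denominators by a central element --- which, $c_n$ being multilinear, merely rescales the value by a nonzero central factor --- one produces $a_1,\dots,a_k\in I$ with $c_n(a_1,\dots,a_k)\notin Q_0$, in particular nonzero. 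Combined with the previous paragraph this element lies in $I\cap Z(A)$ and is nonzero, proving the claim. I would remark that Rowen's original argument avoids central polynomials altogether and instead manipulates a multilinear identity of minimal degree directly; that route is more self-contained but computationally heavier, whereas the central-polynomial proof above isolates cleanly the one genuinely delicate point, namely passing from centrality in each prime factor to centrality in $A$ via semiprimeness.
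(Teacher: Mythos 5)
The paper itself contains no proof of this theorem: it is quoted as Rowen's result with a pointer to \cite{Rowen}, so there is no internal argument to measure yours against, and your proposal must be judged on its own. So judged, it is essentially correct, and it is in fact the standard proof: the reduction to showing $c_n(a_1,\ldots,a_k)\in I\cap Z(A)$ via the subdirect decomposition into prime factors, the bound on PI-degrees via Kaplansky/Posner, and the trichotomy $I\subseteq Q$, $m<n$, $m=n$ are exactly the usual route, and each step is sound (the refinement you invoke --- a multilinear central polynomial for $M_n$ with zero constant term that is simultaneously an \emph{identity} of $M_m$ for $m<n$ --- is not automatic for an arbitrary central polynomial but is a standard adjustment of the Formanek/Razmyslov construction, so citing it is legitimate). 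Your closing historical remark, however, is backwards: Rowen's 1973 Bulletin note is precisely an application of Formanek's then-new central polynomials, not a central-polynomial-free manipulation of a minimal identity; the significance of the paper was exactly that it showed what the new central polynomials buy.

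The one step that is genuinely glossed is the nonvanishing argument. Clearing central denominators turns a nonzero evaluation on $Q(A/Q_0)$ into one with arguments in $A/Q_0$, but not with arguments in the image $\bar I=(I+Q_0)/Q_0$, which is what you need. The missing (standard) observation: by the central-localization form of Posner's theorem, $Q(A/Q_0)=F\cdot (A/Q_0)$ with $F$ the fraction field of the centre, hence $F\cdot \bar I$ is a two-sided ideal of the simple ring $Q(A/Q_0)$ and therefore equals it; since $c_n$ is multilinear, its values on $\bar I$ span over $F$ its values on $Q(A/Q_0)$, and these are nonzero because $c_n$ survives the scalar extension $Q(A/Q_0)\otimes_F \overline{F}\simeq M_n(\overline{F})$, where it takes the value $1$ at suitable matrix units. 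With that inserted, lifting the arguments to $I$ gives $c_n(a_1,\ldots,a_k)\notin Q_0$ as you claim. One caution for the write-up: the central-localization statement you lean on is itself very close in content to the prime case of the theorem being proved (every nonzero ideal of a prime PI ring meets the centre), so you should either cite it as an independently established theorem or derive the nonvanishing on $\bar I$ directly from the central polynomial, to avoid the appearance of circularity.
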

Recall that an $R$-algebra $A$ is a PI-algebra if it there exists an element  $f(x_1,\ldots, x_n)$ in the free algebra $R\langle x_1,\ldots, x_n\rangle$ over $R$ such that $f(a_1,\ldots, a_n)=0$ for any substitution $a_1,\ldots, a_n\in A$ and such that one of the coefficients of a monomial of highest degree of $f$ is $1$. Examples of semiprime PI-algebras are matrix algebras over division algebras that are finite dimensional over their centre or more generally any semiprime algebra that is a finitely generated when considered a module over its centre. Thus Rowen's theorem says that any semiprime PI-algebra is a retractable left $A^e$-module. For a non-trivial example one might consider the quantum plane at root of unity. Let $q\in \CC\setminus\{0\}$. The quantum plane over $\CC$ with parameter $q$ is the algebra 
$$A=\CC_q[x,y] = \CC\langle x,y\rangle/\langle yx-qxy\rangle.$$
Elements of $A$ can be uniquely written as linear combinations of monomials of the form $x^iy^j$ for $i,j\geq 0$. The relation $yx=qxy$ makes $\CC_q[x,y]$ a non-commutative algebra if $q\neq 1$. An elementary calculation shows that the centre of $\CC_q[x,y]$ is $Z(A)=\CC$ if $q$ is not a root of unity and that it is $Z(A)=\CC[x^n,y^n]$ if $q$ is a primitive $n$-th root of unity. In the later case $A$ is generated by all monomials of the form $x^iy^j$ with $0\leq i,j<n$ as a module over $Z(A)$. Hence $A$ is a PI-algebra. Since $A$ is also a domain the centre is large, i.e. any non-zero ideal of $\CC_q[x,y]$ contains a non-zero polynomial of the form $f(x^n,y^n)$.

The second result in this context is Puczy\l owski and Smoktunowicz' description of the Brown-McCoy radical of an algebra $A$ from \cite{Edmund-Agata}. Recall that the Brown-McCoy radical $\mathrm{BMc}(A)$ of $A$ is the intersection of all maximal two-sided ideals. This means that the Brown-McCoy radical is the module theoretic radical of $A$ as bimodule, i.e. $\mathrm{BMc}(A) = \mathrm{Rad}({_{A^e}A})$.
Puczy\l owski and Smoktunowicz described the Brown-McCoy radical of $A[x]$ using the following ideal:
$$PS(A)=\bigcap \left\{ P \subseteq A \mid P \mbox{ is a prime ideal and $A/P$ has a large centre}\right\}.$$

\begin{thm}[Puczy\l owski-Smoktunowicz, 1998]
$\mathrm{BMc}(A[x])=PS(A)[x]$.
\end{thm}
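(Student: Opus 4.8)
The plan is to prove the two inclusions separately, using throughout that in a unital ring the Brown--McCoy radical is the intersection of the maximal two-sided ideals (whose quotients are then simple unital rings) together with the decisive fact that $x$ is \emph{central} in $A[x]$. Thus for a maximal ideal $M$ the simple quotient $S=A[x]/M$ has the image $\bar x$ of $x$ lying in its centre $Z(S)$, which is a field. Writing $\bar A$ for the image of $A$ in $S$ and $P=M\cap A$ for its kernel, the whole argument revolves around the interplay between $\bar A$, the central element $\bar x$, and the simplicity of $S=\bar A[\bar x]$.

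For the inclusion $PS(A)[x]\subseteq\mathrm{BMc}(A[x])$ I would fix a maximal ideal $M$ of $A[x]$ and show that $P=M\cap A$ is a prime ideal such that $A/P\cong\bar A$ has a large centre. Primeness is the easy half: if $I,J$ are ideals of $\bar A$ with $IJ=0$, then since $\bar x$ is central the ideals they generate in $S$ are $\sum_n\bar x^nI$ and $\sum_n\bar x^nJ$, whose product vanishes, so simplicity (hence primeness) of $S$ forces $I=0$ or $J=0$. Granting the large-centre property, $P$ is one of the primes occurring in the definition of $PS(A)$, so $PS(A)\subseteq P$; since $P=M\cap A\subseteq M$ and $M$ is an ideal of $A[x]$, this gives $PS(A)[x]\subseteq P[x]\subseteq M$, and intersecting over all maximal $M$ yields the claim.

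For the reverse inclusion $\mathrm{BMc}(A[x])\subseteq PS(A)[x]$ I would reduce to the assertion that if $B$ is a prime ring with large centre, then $\mathrm{BMc}(B[x])=0$. Granting this, for each prime $P$ with $B:=A/P$ of large centre the canonical surjection $A[x]\twoheadrightarrow B[x]$ carries $\mathrm{BMc}(A[x])$ into $\mathrm{BMc}(B[x])=0$ (the image of a radical lies in the radical of the quotient), so $\mathrm{BMc}(A[x])\subseteq P[x]$; intersecting over all such $P$ and using $\bigcap_{P}P[x]=(\bigcap_{P}P)[x]=PS(A)[x]$ gives the inclusion. To prove $\mathrm{BMc}(B[x])=0$ I would localise: $C=Z(B)$ is a commutative domain which by hypothesis meets every non-zero ideal of $B$, so inverting $C\setminus\{0\}$ produces a ring $B_K$ with $K=\mathrm{Frac}(C)$ in which every non-zero ideal meets $C\setminus\{0\}$ and hence contains a unit; thus $B_K$ is simple with $K$ in its centre. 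In $B_K[x]$ the ideals $(x-\lambda)$ for $\lambda\in K$ (together with quotients by irreducible polynomials over $K$ when $K$ is finite) are maximal with simple quotient and intersect in $0$ by a Vandermonde/degree argument, so $\mathrm{BMc}(B_K[x])=0$; transferring this back along the injective central localisation $B[x]\hookrightarrow B_K[x]=(C\setminus\{0\})^{-1}B[x]$ forces $\mathrm{BMc}(B[x])=0$, since $\mathrm{BMc}(B[x])$ localises into $\mathrm{BMc}(B_K[x])=0$ while $B[x]$ is torsion-free over $C$.

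The two genuine obstacles are exactly the technical hearts of these lemmas. The first is the large-centre property of $\bar A=A/P$: when $\bar x=0$ one has $S=\bar A$ simple and the assertion is trivial, but when $\bar x\neq0$ it is an invertible central element and one must extract, from the relation $1\in\sum_n\bar x^nI$ forced by $IS=S$, a non-zero element of $Z(\bar A)=\bar A\cap K$ inside an arbitrary non-zero ideal $I$; this is where centrality of $x$ and simplicity of $S$ must be combined most carefully. The second is the transfer of Brown--McCoy semisimplicity across the central localisation in the proof that $\mathrm{BMc}(B[x])=0$: one needs that the Brown--McCoy radical is compatible with localisation at a central set of regular elements (equivalently, that the contracted maximal ideals of $B_K[x]$ separate the points of $B[x]$), and checking this—rather than the formal Vandermonde computation—is where the real work lies. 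Rowen's theorem enters only to guarantee a non-trivial supply of large-centre primes, confirming that $PS(A)$ is genuinely meaningful.
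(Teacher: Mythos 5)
Your two-inclusion architecture is the right one, and the easy halves of your reductions are sound: for a maximal ideal $M$ of $A[x]$, your proof that $P=M\cap A$ is prime (via centrality of $\bar x$ and simplicity of $S=A[x]/M$) works, $PS(A)[x]\subseteq P[x]\subseteq M$ follows, and in the other direction the facts that a surjection carries $\mathrm{BMc}(A[x])$ into $\mathrm{BMc}((A/P)[x])$ and that $\bigcap_P P[x]=\bigl(\bigcap_P P\bigr)[x]$ are correct. But the decisive claim of your first inclusion --- that $A/P$ has a large centre --- is exactly the ``surprising Lemma'' of Puczy\l owski--Smoktunowicz that the paper quotes (if $M$ is a maximal ideal of $A[x]$ with $A\cap M=0$, then $A$ has a large centre, applied here to $A/P$), and you do not prove it: you only describe the difficulty of extracting a non-zero central element of $\bar A$ from $1\in\sum_n \bar x^n I$. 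That extraction is the genuine content of the theorem, which is why the survey singles the Lemma out; as written, your first half is a correct reduction to the theorem's hardest ingredient, not a proof of it.

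The second inclusion contains a step that is false as stated. Your computation $\mathrm{BMc}(B_K[x])=0$ is fine (simplicity of $B_K$, $Z(B_K)=K$, the Vandermonde argument, and for finite $K$ the simplicity of $B_K\otimes_K L$ since $B_K$ is central simple over $K$), but the transfer back along $B[x]\hookrightarrow B_K[x]$ rests on the principle that the Brown--McCoy radical is compatible with central localisation at regular elements, and this is false in general: $\ZZ_{(p)}\subseteq \mathbb{Q}$ is such a localisation with $\mathrm{BMc}(\mathbb{Q})=0$ yet $\mathrm{BMc}(\ZZ_{(p)})=p\ZZ_{(p)}\neq 0$. Your parenthetical ``equivalently'' is also wrong: the contracted maximal ideals of $B_K[x]$ separate the points of $B[x]$ for free (their intersection upstairs is already $0$ and the map is injective), but this says nothing about $\mathrm{BMc}(B[x])$ because the contractions are generally not maximal in $B[x]$ --- e.g.\ $(2x-1)\ZZ[x]$, the contraction of $(x-\tfrac12)\subseteq\mathbb{Q}[x]$, has quotient $\ZZ[\tfrac12]$, which is not simple. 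A correct route replaces localisation-compatibility by two genuinely different ingredients: the Amitsur--Krempa property $\mathrm{BMc}(B[x])=\bigl(\mathrm{BMc}(B[x])\cap B\bigr)[x]$, and then the classical ``$cx-1$'' trick --- if $\mathrm{BMc}(B[x])\cap B\neq 0$ it contains a non-zero central $c$ by the large-centre hypothesis, the central element $cx-1$ generates a proper ideal (degree argument, $c$ regular since $B$ is prime), any maximal ideal $M$ containing it also contains $c$, hence $1=cx-(cx-1)\in M$, a contradiction. So the second half of your argument needs to be rebuilt on these (known but nontrivial) facts rather than on the localisation step you propose.
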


Their result relies on the following (surprising) Lemma from \cite{Edmund-Agata}:
\begin{lem} Let $M$ be a maximal ideal of $A[x]$. If $A\cap M=0$, then $A$ has a large centre.
\end{lem}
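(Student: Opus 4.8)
The plan is to pass to the simple quotient ring and exploit that the image of $x$ is a central element lying in a field. Set $B = A[x]/M$ and $t = x + M$. Since $M$ is a maximal two-sided ideal, $B$ is a simple ring, and since $x$ is central in $A[x]$ the element $t$ is central in $B$; thus $t$ lies in the centre $C := Z(B)$, which is a field because the centre of a simple ring is a field. The hypothesis $A \cap M = 0$ means the canonical map $A \to B$ is injective, so I identify $A$ with its image; then $B = A[t]$ with $t$ central, and an element of $A$ is central in $A$ exactly when it commutes with $t$ and with $A$, i.e. $Z(A) = A \cap C$.

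Recalling that ``$A$ has a large centre'' means that every non-zero two-sided ideal of $A$ contains a non-zero central element, I fix a non-zero two-sided ideal $I$ of $A$ and aim to produce a non-zero element of $I \cap Z(A) = I \cap C$. First I would record two consequences of maximality. Since $I \not\subseteq M$ (otherwise $I \subseteq A \cap M = 0$), maximality forces $M + I[x] = A[x]$; as $t$ is central, the two-sided ideal of $B$ generated by $I$ equals $\sum_{k\ge 0} t^k I$, so it is all of $B$, whence $1 = \sum_{k=0}^n a_k t^k$ with all $a_k \in I$. Secondly, the set $J := M \cap I[x]$ of polynomials in $M$ all of whose coefficients lie in $I$ is a non-zero two-sided ideal of $A[x]$: for $0 \neq a \in I$ the element $a\bigl(\sum_k a_k x^k - 1\bigr)$ lies in $J$ and is non-zero.

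Next I would run a minimal-degree argument inside $J$. Choose $g = \sum_{i=0}^e g_i x^i \in J$ of least degree $e$; since $A \cap M = 0$ a non-zero constant cannot lie in $J$, so $e \ge 1$, and the leading coefficient $g_e \in I$ is non-zero. Because $x$ is central, for every $a \in A$ the commutator $ag - ga = \sum_i [a,g_i] x^i$ again lies in $J$ and has degree at most $e$; minimality of $e$ then shows that $[a,g_e]=0$ implies $ag-ga=0$, i.e. $[a,g_i]=0$ for all $i$. In particular, if $g_e$ commutes with all of $A$ then $g_e \in I \cap Z(A)$ is the desired element, so the whole problem is reduced to showing that the minimal leading coefficient can be chosen central.

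I expect this last reduction to be the main obstacle. The difficulty is structural: the relation $\sum_i g_i t^i = 0$ in $B$ together with $1 = \sum_k a_k t^k$ encodes the failure of $g_e$ to be central, yet the natural ``averaging'' invariants one would reach for to manufacture a central element (reduced traces, leading coefficients, or the coordinate maps of a $C$-basis of $B$ drawn from $I$) all take values in the field $C$ rather than in $A$, and so need not land in $A \cap C = Z(A)$. The resolution must use that $t$ is not merely central but, after disposing of the case $t = 0$ separately (there $B \cong A$ is simple and trivially has a large centre), invertible in the field $C$: one divides the relation coming from $g$ by a power of $t$ and re-collects the coefficients so as to return inside $A$, forcing a non-zero central coefficient into $I$. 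Making this re-collection honest, controlling the non-central leading coefficients through the field structure of $C$, is the crux of the argument.
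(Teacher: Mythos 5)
The survey does not actually prove this lemma: it quotes it as a ``surprising'' result from Puczy\l owski and Smoktunowicz \cite{Edmund-Agata}, so the benchmark is the original argument there, and measured against the statement itself your attempt is incomplete --- as you yourself concede in the final paragraph. The preparatory steps are essentially sound: $B=A[x]/M$ is simple with $t=x+M$ central, $C=Z(B)$ is a field, $Z(A)=A\cap C$, maximality gives $M+I[x]=A[x]$ and hence $1=\sum_k a_kt^k$ with $a_k\in I$, and the minimal-degree commutator argument in $J=M\cap I[x]$ correctly shows that if $a$ commutes with the leading coefficient $g_e$ then it commutes with every coefficient of $g$. (One small repair: your witness $a\bigl(\sum_k a_kx^k-1\bigr)$ for $J\neq 0$ could vanish; a safe argument is that $J=0$ would force $MI[x]=I[x]M=0$ and hence $A[x]=M\oplus I[x]$ as a ring direct sum, making $I[x]\cong B$ simple, which is absurd because $\bigoplus_{k\geq 1}Ix^k$ is a proper non-zero ideal of $I[x]$.)

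The genuine gap is the step you leave open, and it is the entire content of the lemma: nothing you have proved forces the existence of a minimal-degree element of $J$ with \emph{central} leading coefficient. Observe that the stability of the set of leading coefficients under commutation with $A$ is vacuous information --- $[a,I]\subseteq I$ holds for every two-sided ideal $I$ of any ring --- so your reduction has extracted almost nothing beyond bookkeeping; and the conclusion cannot follow formally from what you have, since prime rings without large centre exist (in the free algebra $k\langle u,v\rangle$ the ideal generated by $u$ contains no non-zero central element), so the full force of the hypothesis, namely that $A$ embeds in a simple ring $B=A[t]$ generated by an invertible central element $t$ satisfying $1=\sum_k a_kt^k$ with $a_k\in I$, must be spent exactly at this point. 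Your heuristic of dividing by powers of $t$ does buy a little --- e.g.\ if $g_0=0$ in the minimal relation $\sum_i g_it^i=0$ one can divide by $t$ and contradict minimality, so one may assume $g_0\neq 0$ as well --- but no amount of such normalization alone manufactures a central coefficient; the proof in \cite{Edmund-Agata} requires a further, genuinely non-obvious manipulation precisely here, which is why the survey calls the lemma surprising. As it stands, your proposal is a correct reduction together with an accurate diagnosis of the obstacle, not a proof.
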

In the case of the Lemma, if such maximal ideal $M$ of $A[x]$ exist with $M\cap A=0$, then $A$ will also be a prime ring. Recall that a module $M$ over some ring $A$ is called \emph{compressible} if $M$ embeds into any non-zero submodule of it, i.e. for any $0\neq N\subseteq M$ there exists an injective $A$-linear map $f:M\rightarrow N$. Prime algebras with large centre are precisely the algebras that are compressible as bimodule.
\begin{lem}[{see \cite[35.10]{wisbauer96}}] An algebra $A$ is a compressible $A^e$-module if and only if it is prime and has a large centre.
\end{lem}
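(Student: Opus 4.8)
The plan is to translate compressibility, which is a statement about \emph{injective} $A^e$-homomorphisms $A\to I$, into a statement about central elements of the ideal $I$, using the bijection $\psi_I:\mathrm{Hom}_{A^e}(A,I)\simeq Z(I)=I\cap Z(A)$ already established above. First I would record that under $\psi_I$ a homomorphism corresponds to the central element $z=(1)f$ and acts as $f:a\mapsto az=za$; hence its kernel is exactly the annihilator $\mathrm{Ann}_A(z)=\{a\in A\mid az=0\}$, which for central $z$ coincides with the two-sided annihilator. Consequently an injective $A^e$-homomorphism $A\to I$ is the same datum as an element $z\in I\cap Z(A)$ with $\mathrm{Ann}_A(z)=0$, and such a $z$ is automatically non-zero, since $z=0$ would force $\mathrm{Ann}_A(z)=A$. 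Thus $A$ is compressible as an $A^e$-module if and only if every non-zero two-sided ideal $I$ contains a central element with zero annihilator.

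For the forward direction I would argue as follows. Given compressibility, the existence of a non-zero central element in every non-zero ideal is immediate, so $A$ has a large centre. To see that $A$ is prime I would use the characterisation that $A$ is prime exactly when the (left or right) annihilator of every non-zero two-sided ideal vanishes. So let $I\neq 0$ be a two-sided ideal and pick, by compressibility, a central $z\in I$ with $\mathrm{Ann}_A(z)=0$. If $x\in A$ satisfies $Ix=0$, then in particular $zx=0$, and centrality gives $x\in \mathrm{Ann}_A(z)=0$; hence the annihilator of $I$ is zero and $A$ is prime.

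For the converse I would start from a prime algebra $A$ with large centre and a non-zero two-sided ideal $I$. Large centre yields a non-zero $z\in I\cap Z(A)$, and I claim the associated map $f_z:a\mapsto za$ is injective, i.e. $\mathrm{Ann}_A(z)=0$. Here I would invoke primeness through the non-zero ideal $J=Az=zA=AzA$ generated by $z$: if $az=0$ then $aJ=a(zA)=(az)A=0$, so $a$ lies in the left annihilator of the non-zero ideal $J$, which is zero by primeness. Therefore $f_z$ is an injective $A^e$-homomorphism $A\to I$, and $A$ is compressible.

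The routine parts are the verification that $f_z$ is $A^e$-linear (which is already packaged in $\psi_I^{-1}$) and that one-sided annihilators of two-sided ideals are themselves two-sided ideals. The only point demanding care --- the main obstacle --- is keeping the two equivalent element-level formulations of primeness straight: one must notice that the single central generator $z$ already witnesses the annihilator condition for the \emph{whole} ideal $I$ in the forward direction and, dually, that in the converse the annihilator of $z$ is controlled by the annihilator of the ideal $J$ it generates. Once the reduction to central elements and their annihilators is in place, both implications are short.
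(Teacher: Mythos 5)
Your proof is correct and takes the expected route: the paper itself gives no proof of this lemma, deferring to \cite[35.10]{wisbauer96}, and your reduction via the bijection $\psi_I:\mathrm{Hom}_{A^e}(A,I)\simeq I\cap Z(A)$ --- identifying injective $A^e$-maps $A\to I$ with central elements $z\in I$ having $\mathrm{Ann}_A(z)=0$ --- is precisely the argument that setup is designed for. Both directions are sound, including the two delicate points you flag: in the forward direction a single central $z\in I$ with zero annihilator witnesses the vanishing of the annihilator of all of $I$, and in the converse primeness applied to the non-zero ideal $Az=zA$ forces $\mathrm{Ann}_A(z)=0$.
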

Hence, in module theoretic terms, $PS(A)$ is the intersection of all those $A^e$-submodules $P$ of $A$ such that $A/P$ is a compressible $A^e$-module.

\subsection{Derivations}

A ($R$-linear) derivation of an $R$-algebra $A$ is an $R$-linear map $\partial: A\rightarrow A$ such that $\partial(ab)=\partial(a)b+a\partial(b)$ for all $a,b\in A$.  Examples are ordinary partial derivations $\partial_{x_i}$ on the polynomial ring $R[x_1, \ldots, x_n]$ over $R$. For any $a\in A$ of an $R$-algebra $A$, its commutator  $\partial(a)=[a,-]$, with $[a,b]=ab-ba$ for $b\in A$,  is a derivation, called an  {\emph{inner derivation}} of $A$.

Given a derivation $\partial$ one constructs the differential operator ring $B=A[x;\partial]$ as the $R$-algebra generated by $A$ and $x$ subject to 
$$ xa=ax + \partial(a) \qquad \forall a\in A.$$
The algebra $A[x;\partial]$ can be constructed as a subalgebra of $\mathrm{End}_{A}(A[x])$ such that $A[x;\partial]$ is  a free left $A$-module with basis $\{ x^i \mid i\in \NN\}$. Hence the elements of $B$ can be uniquely written as (left) polynomials $\sum_{i=0}^n a_ix^i$ with $a_i\in A$.
Moreover $A$ becomes a left $A[x;\partial]$-module with respect to the action  $x\cdot a = \partial(a)$ or more generally
$$\left( \sum_{i=0}^n a_ix^i \right) \cdot b = \sum_{i=0}^n a_i \delta^i(b) \qquad \forall b\in A, \:\: \forall \sum_{i=0}^n a_ix^i \in B.$$
The left $A[x;\partial]$-submodules of $A$ are precisely those left ideals $I$  of $A$ that are stable under the derivation, i.e. $\partial(I)\subseteq I$.
For any left $A[x;\partial]$-module $M$ one defines its submodule of constants as
$$M^\partial = \{m\in M \mid x\cdot  m=0\} = \mathrm{Ann}_M(x).$$
For $M=A$ one has $A^\partial=\mathrm{Ker}(\partial)$ which is easily seen to be  a subalgebra of $A$.
Analogously to the bimodule situation we have the following $R$-linear isomorphisms for any left $A[x;\partial]$-module $M$:
\begin{equation}\label{Iso_Derivation}
\psi_M: \mathrm{Hom}_{A[x;\partial]}(A,M) \longrightarrow M^\partial \qquad \mbox{ given by } \qquad f \mapsto  (1)f \qquad \forall f\in\mathrm{Hom}_{A[x;\partial]}(A,M).\end{equation}
Its inverse map is 
\begin{equation}\psi_M^{-1}: M^\partial \longrightarrow \mathrm{Hom}_{A[x;\partial]}(A,M) \qquad \mbox{ given by } \qquad m \mapsto [a\mapsto a\cdot m] \qquad \forall m\in M^\partial.\end{equation}
In particular $\psi_A:\mathrm{End}_{A[x;\partial]}(A) \simeq A^\partial$ is an isomorphism of $R$-algebras and the bijections $\psi_M$ are left $A^\partial$-linear maps. Using these isomorphisms the following Lemma is obvious:

\begin{lem}
$A$ is a retractable $A[x;\partial]$-module if and only if $A^\partial$ is large in $A$, i.e. $A^\partial$ intersects all non-trivial $\partial$-stable left ideals of $A$ non-trivially.
\end{lem}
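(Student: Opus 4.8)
The plan is to reduce the retractability condition for $A$ to a statement about intersections with $A^\partial$, using the isomorphism (\ref{Iso_Derivation}) in exactly the way the analogous Lemma for the enveloping algebra $A^e$ was handled. First I would recall from the discussion preceding the statement that the left $A[x;\partial]$-submodules of $A$ are precisely the $\partial$-stable left ideals $N$ of $A$, and that each such $N$ is itself a left $A[x;\partial]$-module under the restricted action of $x$ (namely $x\cdot n = \partial(n)$ for $n\in N$).

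For a fixed $\partial$-stable left ideal $N$, I would then apply the isomorphism $\psi_N$ of (\ref{Iso_Derivation}) with $M=N$, giving an $R$-linear (indeed $A^\partial$-linear) isomorphism
$$\mathrm{Hom}_{A[x;\partial]}(A,N)\;\simeq\; N^\partial.$$
The one point that requires a moment's care is the identification of the constants: since the action of $x$ on $N$ is the restriction of $\partial$, we have $N^\partial = \{n\in N\mid \partial(n)=0\} = N\cap \mathrm{Ker}(\partial) = N\cap A^\partial$. Combining this with the previous isomorphism yields the natural identification
$$\mathrm{Hom}_{A[x;\partial]}(A,N)\;\simeq\; N\cap A^\partial$$
for every $\partial$-stable left ideal $N$.

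From here the equivalence is immediate and no genuine obstacle arises. By definition $A$ is a retractable $A[x;\partial]$-module precisely when $\mathrm{Hom}_{A[x;\partial]}(A,N)\neq 0$ for every non-zero $A[x;\partial]$-submodule $N$, that is, for every non-zero $\partial$-stable left ideal $N$; by the identification above this holds if and only if $N\cap A^\partial\neq 0$ for every such $N$, which is exactly the assertion that $A^\partial$ is large in $A$. Thus the whole argument is a direct translation through the isomorphism (\ref{Iso_Derivation}), and the only content beyond invoking that isomorphism is the elementary verification that a $\partial$-stable ideal is an $A[x;\partial]$-module whose submodule of constants is $N\cap A^\partial$ --- mirroring the bijection $\psi_I:\mathrm{Hom}_{A^e}(A,I)\simeq Z(I)=I\cap Z(A)$ used earlier for the enveloping algebra.
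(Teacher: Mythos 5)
Your proposal is correct and follows exactly the route the paper intends: the paper declares the Lemma obvious from the isomorphism (\ref{Iso_Derivation}) together with the identification of $A[x;\partial]$-submodules of $A$ with $\partial$-stable left ideals, and your argument simply spells out these details, including the key identification $N^\partial = N\cap A^\partial$ mirroring the bimodule case $\psi_I:\mathrm{Hom}_{A^e}(A,I)\simeq I\cap Z(A)$.
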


A sufficient condition for this to happen is the local nilpotency of  $\partial$, i.e. if for every $a\in A$, there exists $n\in \NN$ such that $\partial^n(a)=0$. 

\begin{prop} If  $\partial$ is locally nilpotent, then $A$ is a retractable $A[x;\partial]$-module.
\end{prop}
\begin{proof} The proof of this fact is obvious, because if $0\neq a\in I$ is a non-zero element of an $\partial$-stable left ideal $I$ of $A$, then  by hypothesis there exists $n\in \NN$ such that $\partial^n(a)=0$. Take the least $n\in \NN$ such that  $\partial^n(a)=0$, then $\partial^{n-1}(a)$ is a non-zero element of $I\cap A^\partial$, which proves that $A^\partial$ is large in $A$ and hence $A$ is a retractable $A[x;\partial]$-module.
\end{proof}
For example the partial derivatives $\frac{\partial}{\partial x_i}$  of $A=R[x_1, \ldots, x_n]$ for any $i=1,\ldots, n$ are locally nilpotent. However it is unknown when $A$ is a retractable $A[x;\partial]$-module for an arbitrary derivation $\partial$.

\begin{question} What are necessary and sufficient conditions for $A$ to be a retractable $A[x;\partial]$-module? in other words, find conditions on $A$ and $\partial$ such that any non-zero $\partial$-stable left ideal contains a non-zero constant.
\end{question}

Zelmanowitz called a left $R$-module \emph{fully retractable} if  for any non-zero submodule $N$ and non-zero $g:N \rightarrow M$ there exists $h:M\rightarrow N$ such that $hg\neq 0$. It is not clear when $A$ is fully retractable as $A[x;\partial]$-module. The next Proposition can be found in \cite{BorgesLomp}.

\begin{prop}[Borges-Lomp, 2011] Let $\partial$ be a locally nilpotent derivation of $A$.
\begin{enumerate}
\item $A$ is a compressible left $A[x;\partial]$-module, provided $A^\partial$ is a domain.
\item  $A^\partial$ is a left Ore domain if and only if $A$ is a uniform left $A[x;\partial]$-module and $A^\partial$ is a domain.
\item  $A$ is critically compressible as left $A[x;\partial]$-module if and only if $A^\partial$ is a left Ore domain and $A$ is fully retractable as left $A[x;\partial]$-module.
\end{enumerate}
\end{prop}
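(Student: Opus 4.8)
The plan is to reduce all three statements to the concrete description of homomorphisms furnished by the isomorphism $\psi_M$ of \eqref{Iso_Derivation}. Since the $B$-submodules of $A$ (with $B=A[x;\partial]$) are exactly the $\partial$-stable left ideals, and $\mathrm{Hom}_B(A,I)\cong I^\partial=I\cap A^\partial$ via $f\mapsto(1)f$, every $B$-homomorphism $A\to I$ is a right multiplication $r_c$ by a constant $c\in I\cap A^\partial$, while $\mathrm{End}_B(A)\cong A^\partial$ as $R$-algebras. Local nilpotency of $\partial$ already gives, by the previous Proposition, that $A^\partial$ is large, i.e. each nonzero $\partial$-stable left ideal contains a nonzero constant. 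The engine of the whole argument is a single lemma: if $A^\partial$ is a domain then every nonzero $c\in A^\partial$ is a left non-zero-divisor, so $r_c$ is injective. I would prove this by observing that the left annihilator $\{a\in A: ac=0\}$ is a $\partial$-stable left ideal (since $\partial(ac)=\partial(a)c$ when $\partial(c)=0$); were it nonzero it would contain a nonzero constant $d$ by largeness, and $dc=0$ with $c,d\in A^\partial\setminus\{0\}$ would contradict the domain hypothesis.

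Part (1) is then immediate: given a nonzero $\partial$-stable left ideal $I$, pick a nonzero constant $c\in I\cap A^\partial$ by largeness; the lemma makes $r_c\colon A\to I$ (with image $Ac\subseteq I$) an injective $B$-homomorphism, so $A$ embeds in $I$ and is compressible.

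For part (2) I would run both implications through constants. For $(\Rightarrow)$, choose nonzero constants $c\in I$, $d\in K$ in two nonzero submodules; the left Ore property of $A^\partial$ gives a nonzero common left multiple $sc=td$ with $s,t\in A^\partial$, and since $A^\partial c\subseteq I$ and $A^\partial d\subseteq K$ this element lies in $I\cap K$, proving uniformity (domain-ness being part of the hypothesis). For $(\Leftarrow)$, given nonzero $c,d\in A^\partial$, the ideals $Ac,Ad$ are $\partial$-stable, so uniformity gives $Ac\cap Ad\neq0$ and largeness supplies a nonzero constant $e=ac=bd$ inside it. The decisive move is to upgrade the multipliers to constants: from $0=\partial(e)=\partial(a)c$ and the lemma one gets $a\in A^\partial$, and likewise $b\in A^\partial$, so $e\in A^\partial c\cap A^\partial d$ is the required nonzero common left multiple. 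I expect this extraction-of-a-constant trick to be the main obstacle, since the common multiples handed over by uniformity need not a priori involve constants.

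Part (3) rests on the standard fact that a module is \emph{critically compressible} precisely when it is compressible and \emph{monoform} (every nonzero homomorphism from a submodule into $A$ is injective). For $(\Leftarrow)$, a left Ore domain $A^\partial$ already delivers compressibility (part 1) and uniformity (part 2), so only monoform remains: given a nonzero $g\colon N\to A$, full retractability provides some $h=r_c\colon A\to N$ with $0\ne hg\in\mathrm{End}_B(A)\cong A^\partial$; thus $hg=r_{c'}$ with $c'\ne0$, which is injective by the lemma, forcing $\ker g\cap Ac=0$ and hence $\ker g=0$ by uniformity. For $(\Rightarrow)$, monoform makes every nonzero endomorphism injective, so $\mathrm{End}_B(A)\cong A^\partial$ is a domain, and monoform forces uniformity (a projection $I\oplus K\to I$ along a nonzero complement $K$ would be a nonzero non-injective partial endomorphism); part (2) then yields that $A^\partial$ is left Ore. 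Finally, compressibility gives an embedding $h_0\colon A\to N$, and since the nonzero $g$ is injective by monoform, $h_0g\ne0$, so $A$ is fully retractable. The one genuinely conceptual point is pinning down the dictionary between the module-theoretic notions (monoform, critically compressible) and the ring-theoretic Ore condition; once that is fixed, the constants play the role of a non-commutative set of denominators and everything collapses onto the engine lemma.
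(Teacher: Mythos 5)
Your argument is correct, and since the survey states this proposition without proof (it simply cites \cite{BorgesLomp}), the relevant comparison is with the original article, whose proof runs along essentially the same lines as yours: the engine lemma that a nonzero constant has zero left annihilator (because $\{a\in A\mid ac=0\}$ is a $\partial$-stable left ideal, hence meets $A^\partial$ nontrivially if nonzero, contradicting the domain hypothesis), the extraction of constant multipliers from $\partial(a)c=0$ in part (2), and the reduction of part (3) to compressibility plus the monoform condition via the identification $\mathrm{Hom}_{A[x;\partial]}(A,I)\simeq I\cap A^\partial$. The one fact you import without proof --- Zelmanowitz' equivalence that a module is critically compressible if and only if it is compressible and monoform --- is indeed standard and is also the pivot of the original argument, so your reconstruction is complete.
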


Let $R=k$ be a field of characteristic zero and $\partial$ a  locally nilpotent derivation on $A$ such that there exists an element $a\in A$ with $\partial(a)=1$. Then by \cite[Proposition 3.10]{BorgesLomp} $A$ is a self-projective $A[x;\partial]$-module and module theory yields the following result (see \cite[Proposition 3.12]{BorgesLomp})

\begin{prop}[Borges-Lomp, 2011]
Let $A$ be an algebra over a field $k$ of characteristic zero and $\partial$ a locally nilpotent derivation of $A$ such that $\partial(a)=1$, for some $a\in A$. Then the following statements are equivalent:
\begin{enumerate}
\item[(a)] $A^\partial$ is a left Ore domain;
\item[(b)] $A$ is a left Ore domain;
\item[(c)] $A$ is a critically compressible $A[x; \partial]$-module.
\end{enumerate}
\end{prop}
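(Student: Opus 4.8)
The plan is to turn all three statements into statements about a single Ore extension, and then to prove the cycle $(a)\Rightarrow(b)\Rightarrow(c)\Rightarrow(a)$, using the previous Proposition at the two ends and one torsion-theoretic observation in the middle. First I would record the structural reduction forced by the slice hypothesis. The commutator $D:=[a,-]$ maps $A^\partial$ into itself and is a derivation there, since $\partial([a,c])=\partial(a)c-c\partial(a)=c-c=0$ for $c\in A^\partial$. Local nilpotency makes the projection $\pi(f):=\sum_{i\ge 0}\frac{(-a)^i}{i!}\,\partial^i(f)$ a finite sum taking values in $A^\partial$ (this is where $\mathrm{char}\,k=0$ enters), and the slice theorem for locally nilpotent derivations then identifies $A$ with the differential operator ring $A^\partial[a;D]$, free as a left $A^\partial$-module on the powers of $a$, with $\partial$ acting as $\frac{d}{da}$. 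This is the structural fact underlying the self-projectivity of Proposition 3.10, and it is what lets me pass freely between $A^\partial$, the ring $A$, and the $A[x;\partial]$-module $A$.

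For $(a)\Rightarrow(b)$ I would argue inside $A=A^\partial[a;D]$. A leading-coefficient computation shows that if $A^\partial$ is a domain then so is $A$, the top term of a product $\left(\sum c_ia^i\right)\left(\sum d_ja^j\right)$ being $c_md_n a^{m+n}\neq 0$. For the Ore condition I would extend $D$ to the left quotient division ring $\Delta=\mathrm{Frac}(A^\partial)$, note that $\Delta[a;D]$ is a left principal ideal domain and hence left Ore, and check that $A^\partial\setminus\{0\}$ is a left Ore set in $A$ whose localisation is exactly $\Delta[a;D]$; clearing denominators then transports the Ore condition down to $A$.

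For $(b)\Rightarrow(c)$ the key point is that over a left Ore domain every nonzero homomorphism out of a submodule into $A$ is injective. Indeed, if $N$ is a nonzero $\partial$-stable left ideal then $N$ is uniform, so a nonzero $A[x;\partial]$-linear map $g:N\to A$ with nonzero kernel would have essential kernel, forcing $\mathrm{im}(g)$ to be a nonzero torsion submodule of the torsion-free module $A$, which is impossible; hence $g$ is injective. By local nilpotency $N$ contains a nonzero constant, so $N^\partial\neq 0$, and under the isomorphism $\psi_N$ each nonzero $c\in N^\partial$ gives a nonzero homomorphism $h_c:A\to N$, $b\mapsto bc$. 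The composite $A\xrightarrow{h_c}N\xrightarrow{g}A$ is then nonzero because $h_c\neq 0$ and $g$ is injective, so $A$ is fully retractable. As $(a)$ and $(b)$ coincide, the hypotheses of part (3) of the previous Proposition are satisfied and $A$ is critically compressible, which is $(c)$; finally $(c)\Rightarrow(a)$ is the forward direction of that same part, closing the cycle.

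The step I expect to be the main obstacle is the Ore half of $(a)\Rightarrow(b)$: showing that the differential operator ring over a left Ore domain is again a left Ore domain. That is where the genuine ring theory lies — extending $D$ to $\Delta$, identifying the localisation of $A$ at $A^\partial\setminus\{0\}$ with the principal ideal domain $\Delta[a;D]$, and carrying the Ore condition back to $A$. By comparison the remaining implications are soft, resting only on the isomorphisms $\psi$, the retractability supplied by local nilpotency, and the previous Proposition.
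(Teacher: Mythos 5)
Your architecture is sound, and it is worth saying up front that the survey itself contains no proof of this statement: it simply cites \cite{BorgesLomp} and indicates the intended route, namely that Proposition 3.10 of that paper gives self-projectivity of $A$ as an $A[x;\partial]$-module and ``module theory yields'' the equivalences via the preceding Proposition. Measured against that, your reconstruction is legitimate and largely correct. The noncommutative slice theorem you invoke does hold: $D=[a,-]$ preserves $A^\partial$ (as $\partial([a,c])=c-c=0$ for $c\in A^\partial$), your Dixmier-type projection $\pi$ telescopes into $A^\partial$, and an induction on the nilpotency degree of $\partial$ gives $A=\bigoplus_{i\ge 0}A^\partial a^i\cong A^\partial[a;D]$. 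The localisation argument for (a)$\Rightarrow$(b) is the standard one and works. Your key observation for (b)$\Rightarrow$full retractability is also correct and can even be streamlined: if $g:N\rightarrow A$ is $A[x;\partial]$-linear with $0\neq n\in\mathrm{Ker}(g)$, then for any $m\in N$ the left Ore condition in $A$ gives $0\neq um=vn$, whence $u\,(m)g=(vn)g=0$ and $(m)g=0$ since $A$ is a domain; so nonzero maps are injective, and composing with $h_c:b\mapsto bc$ for $0\neq c\in N^\partial$ (which exists by local nilpotency) gives full retractability. This is a nice direct substitute for the module-theoretic lemma implicit in the paper's route, that a self-projective retractable module is fully retractable. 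Finally (c)$\Rightarrow$(a) is indeed immediate from part (3) of the preceding Proposition.

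There is, however, one genuinely circular step. Inside (b)$\Rightarrow$(c) you feed part (3) the hypothesis that $A^\partial$ is a left Ore domain, justified by ``as (a) and (b) coincide'' --- but at that point of the cycle you have only proved (a)$\Rightarrow$(b); the implication (b)$\Rightarrow$(a) is exactly what completing the cycle is supposed to deliver, so as written the argument assumes what it proves. The repair is cheap, and you have two options. Either invoke part (2) of the preceding Proposition: under (b), $A$ is a left Ore domain, hence uniform as a left $A$-module and a fortiori uniform as a left $A[x;\partial]$-module (its $A[x;\partial]$-submodules are particular left ideals), and $A^\partial$ is a domain as a subring of $A$, so part (2) yields (a) at once. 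Or run a leading-coefficient descent in your own decomposition $A=\bigoplus_{i\ge 0}A^\partial a^i$: given nonzero $s,s'\in A^\partial$, the Ore condition in $A$ gives $0\neq us=vs'$, and since $a^i s=\sum_k \binom{i}{k}D^k(s)a^{i-k}$ shows that right multiplication by an element of $A^\partial$ does not raise the degree in $a$, comparing top coefficients gives $u_m s=v_m s'\neq 0$ with $u_m,v_m\in A^\partial$, i.e.\ a common left multiple inside $A^\partial$. With either patch (b)$\Rightarrow$(a) is established independently, your (b)$\Rightarrow$(c) becomes legitimate, and the proof is complete.
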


\begin{exa}[Goodearl, 1980]
Let $A=k[[t]]$ be the power series ring over a field $k$ of characteristic $0$ and let $\partial$ be the derivation with $\partial(t^n)=nt^n$ for all $n\geq 0$. Certainly $\partial$ is not locally nilpotent.
Any ideal of $A$ is $\partial$-stable, because the proper ideals are of the form $I=At^n$ for $n\geq 0$.
Since for any $a=\sum_{n=0}^\infty a_n t^n \in A$ one has $\partial(a) =\sum_{n=0}^\infty n a_n t^n \neq 1$ 
we have that there does not exist any $a\in A$ with $\partial(a)=1$. Nevertheless $A$ is a self-projective left $A[x;\partial]$-module. 
To see this note that by the correspondence (\ref{Iso_Derivation}) it is enough to show that $(A/I)^\partial = (A^\partial + I)/I$ for any $\partial$-stable left ideal $I$ of $A$. Let  $I=At^m$ be any ideal of $A$ and $a=\sum_{n=0}^\infty a_n t^n$ with $\partial(a)\in I$. Then there exists $b\in A$ such that 
 $\partial(a)=\sum_{n=0}^\infty na_n t^n = bt^m\in I$, which implies that $a_i=0$ for all $1\leq i < m$. Thus $a=a_0+b't^m$ for some $b'\in A$ and 
 $a+I=a_0+I$ in $A/I$. Since $a_0\in A^\partial$ this shows 
$(A/I)^\partial \subseteq (A^\partial + I)/I$ while the reversed inclusion is obvious. Since $A$ is a Noetherian integral domain, $A[x;\partial]$ is a left Noetherian Ore domain. However as $A^\partial=k$ is the base field and $A$ is not simple as left $A[x;\partial]$-module, $A$ is not retractable and hence not compressible as $A[x;\partial]$-module.
\end{exa}

Note that the set $\mathrm{Der}_R(A)$ of derivations on the $R$-algebra $A$ forms a Lie algebra with the ordinary Lie product induced by the product(=composition) of $\mathrm{End}_R(A)$, i.e. if $\partial, \partial \in \mathrm{Der}_R(A)$, then their commutator 
$$[\partial, \delta] = \partial\circ \delta - \delta \circ \partial \in \mathrm{Der}_R(A)$$
is again a derivation of $A$. An action of an arbitrary abstract Lie algebra $\mathfrak{g}$ over $R$ by derivations is given by a map of Lie algebras $d:\mathfrak{g}\rightarrow \mathrm{Der}_R(A)$. We shall write the image of $x\in\mathfrak{g}$ under $d$ as $d_x$. An analogous construction to the construction of the differential operator ring is  given by a new product on the tensor product of $A$ and the universal enveloping algebra $U(\mathfrak{g})$ of $\mathfrak{g}$. The new algebra is called the smash product of $A$ and $U(\mathfrak{g})$ and is denoted by $A\# U(\mathfrak{g})$. The product is determined by
$$ (1\# x)(a\# 1) = a \# x + d_x(a) \# 1 \qquad \forall x\in \mathfrak{g}, a\in A.$$
Later we will shortly mention Hopf algebras and their action on rings and $U(\mathfrak{g})$ is one of the examples. 
Again $A$ becomes a left $A\# U(\mathfrak{g})$-module where the module action is given by $(a\# x) \cdot b = a \: d_x(b)$ for all $a,b\in A$ and $x\in \mathfrak{g}$. Again one can consider the subset of all those elements $a\in A$ such that $d_x(a)=0$ for all $x\in \mathfrak{g}$, i.e.
$$ A^{\mathfrak{g}} = \bigcap_{x\in \mathfrak{g}} \mathrm{Ker}(d_x).$$
For an arbitrary left $A\# U(\mathfrak{g})$-module $M$ one sets $M^\mathfrak{g} = \bigcap_{x\in \mathfrak{g}} \mathrm{Ann}_M(1\# x).$
As before there are functorial $R$ linear isomorphisms $M^{\mathfrak{g}} \simeq \mathrm{Hom}_{A\#U(\mathfrak{g})}(A,M)$ and in particular $A^\mathfrak{g} \simeq \mathrm{End}_{A\#U(\mathfrak{g})}(A).$
Retractability for $A$ as a left $A\#U(\mathfrak{g})$-module also means here that $A^{\mathfrak{g}}$ is large in $A$ with respect to all those left ideals of $A$ that are stable under alll derivations $d_x$ with $x\in \mathfrak{g}$.

In the case of a single derivation $\partial \in \mathrm{Der}_R(A)$ one considers the trivial Lie algebra $\mathfrak{g}=R$ with zero bracket. The map $d:R\rightarrow \mathrm{Der}_R(A)$ is then given by $r\mapsto r\partial$ for all $r\in R$. Note that the enveloping algebra of the trivial Lie algebra is the polynomial ring $R[x]$ in one variable. Moreover 
$$A\# U(\mathfrak{g}) = A{\otimes_R} R[x] = A[x]$$ is determined by the product:
$$ (1\# x) (a\#1) = a\# x + \partial(a)\# 1\qquad \mbox{ or better} \qquad xa =ax + \partial(a) \qquad \forall a\in A,$$
showing that $A\# R[x] \simeq A[x;\partial]$.

\subsection{Group Actions}
A group $G$ act on an $R$-algebra $A$ by automorphism, which means that there is a homomorphism of groups $\rho:G\rightarrow \mathrm{Aut}_R(A)$ from $G$ to the group of $R$-linear automorphisms of $A$. We denote the image of $g\in G$ under $\rho$ by ${\rho}_g$, although we sometimes write $^ga$ instead of  ${\rho}_g(a)$ for $a\in A$ and $g\in G$. As in the last section, a new algebra can be attached to $G$ and $A$, which is the skew-group ring $A*G$ defined on $A {\otimes_R} R[G]$, where $R[G]$ is the group ring of $G$ over $R$. Alternatively one might consider $A*G$ as the free left $A$-module with basis $\{ \overline{g} \mid g\in G\}$ such that the multiplication is defined as 
$$ a\overline{g}\cdot b\overline{h} = a{\rho}_g(b)\overline{gh}  = a({^gb})\overline{gh} \qquad \forall a,b\in A, \forall g,h\in G.$$
If $G$ is cyclic infinite, i.e. $G=\langle \sigma \rangle$, then $A*G$ is equal to the Laurent skew-polynomial ring $A[x,x^{-1}; \sigma]$ whose underlying space are the Laurent polynomials with coefficients in $A$ and whose multiplication is determined by
$$ x^na = \sigma^n(a) x^n \qquad \forall a\in A, n\in \ZZ.$$
If $G=\langle \sigma\rangle $ is cyclic of order $n$, then $A*G$ is equal to the factor $A[x;\sigma]/\langle x^n-1\rangle$ of the skew-polynomial ring $A[x;\sigma]$. 

Let $G$ be any group acting on $A$ as automorphism. Then $A$ has a left $A*G$-module structure defined by $$a\overline{g} \cdot b = a \rho_g(b) \qquad \forall a,b\in A, g\in G.$$
The left $A*G$-submodules of $A$ are precisely the $G$-stable left ideals of $A$. Let $M$ be any left $A*G$-module. Then the submodule of fixed elements of $M$ is  $$M^G=\{m\in M \mid \forall g\in G: \: g\cdot m = m.\}$$
Moreover one has again $R$-linear isomorphisms for any left $A*G$-module $M$:
\begin{equation}\psi_M: \mathrm{Hom}_{A*G}(A,M) \longrightarrow M^G  \qquad \mbox{ given by } \qquad f \mapsto  (1)f \qquad \forall f\in\mathrm{Hom}_{A*G}(A,M).
\end{equation}
with inverse map
\begin{equation}\psi_M^{-1}: M^G \longrightarrow \mathrm{Hom}_{A*G}(A,M) \qquad \mbox{ given by } \qquad m \mapsto [a\mapsto a\cdot m] \qquad \forall m\in M^G.\end{equation}
In particular $\psi_A:\mathrm{End}_{A*G}(A) \simeq A^G$ is an isomorphism of $R$-algebras and the bijections $\psi_M$ are left $A^G$-linear maps.

\begin{lem}
$A$ is a retractable $A*G$-module if and only if $A^G$ is large in $A$, i.e. $A^G$ intersects all non-trivial $G$-stable left ideals of $A$ non-trivially.
\end{lem}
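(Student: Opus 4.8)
The plan is to reduce the statement to the $R$-linear bijection $\psi_I$ exactly as in the analogous lemmas for the bimodule and derivation settings, so that retractability translates verbatim into the asserted largeness condition. The only moving parts are the identification of the relevant submodules and the computation of the fixed-point module of a $G$-stable ideal.

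First I would recall, from the discussion immediately preceding the statement, that the non-zero $A*G$-submodules of $A$ are precisely the non-zero $G$-stable left ideals $I$ of $A$. By the definition of retractability given in the module theory section, $A$ is a retractable $A*G$-module exactly when $\mathrm{Hom}_{A*G}(A,I)\neq 0$ for every such $I$.

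Next I would invoke the isomorphism $\psi_I:\mathrm{Hom}_{A*G}(A,I)\simeq I^G$, which is the general map $\psi_M$ applied to the submodule $M=I$. The point requiring a line of care is that the $A*G$-action on $I$ is the restriction of the action on $A$, so that the fixed-point submodule satisfies $I^G = \{b\in I \mid \rho_g(b)=b \ \forall g\in G\} = I\cap A^G$. Granting this, $\mathrm{Hom}_{A*G}(A,I)\neq 0$ if and only if $I\cap A^G\neq 0$.

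Combining these observations, $A$ is retractable as an $A*G$-module if and only if $I\cap A^G\neq 0$ for every non-zero $G$-stable left ideal $I$, which is precisely the statement that $A^G$ is large in $A$. (Equivalently, one could phrase this through the earlier lattice lemma, noting that faithfulness of $\mathrm{Hom}(A,-)$ on the sublattice of $G$-stable left ideals is exactly this largeness condition.) Since every step is either a definition or the already-established bijection $\psi$, there is no genuine obstacle; the only item needing explicit verification is the equality $I^G = I\cap A^G$, which is immediate from the $G$-stability of $I$.
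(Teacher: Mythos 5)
Your proof is correct and follows exactly the route the paper intends: the paper leaves this lemma without explicit proof precisely because, as in the bimodule case (where it proves the analogue via $\psi_I:\mathrm{Hom}_{A^e}(A,I)\simeq Z(I)=I\cap Z(A)$), the argument is the identification of $A*G$-submodules with $G$-stable left ideals together with the bijection $\psi_I:\mathrm{Hom}_{A*G}(A,I)\simeq I^G=I\cap A^G$. Your explicit verification of $I^G=I\cap A^G$ is the one small point the paper leaves implicit, and it is handled correctly.
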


The existence of non-trivial fixed elements in $G$-stable left ideals reduces the study of the structure of $G$-stable left ideals of $A$ to the study of left ideals of $A^G$. The following result is a classical theorem in the study of group action:

\begin{thm}[Bergman-Isaacs, 1973; Kharchenko, 1974]\label{Bergman-Isaacs}
Let $G$ be a finite group of order $n$ acting on an  $R$-algebra  $A$. Assume that one of the following conditions is verified:
\begin{enumerate}
\item $A$ is $n$-torsionfree and does not contain any non-zero nilpotent $G$-stable ideal or 
\item $A$ is reduced, i.e. does not contain any nilpotent element.
\end{enumerate}
Then $A$ is retractable as left $A*G$-module.
\end{thm}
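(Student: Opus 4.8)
The plan is to invoke the Lemma just stated, which identifies retractability of $A$ as a left $A*G$-module with the following ring-theoretic condition: every non-zero $G$-stable left ideal $I$ of $A$ has $I\cap A^G\neq 0$. The obvious tool for manufacturing fixed elements is the trace $t(a)=\sum_{g\in G}{}^{g}a$, which lands in $A^G$ since left translation by any $h\in G$ merely permutes the summands. As $I$ is $G$-stable, ${}^{g}a\in I$ for every $a\in I$, so $t(I)\subseteq I\cap A^G$; hence it suffices to find $a\in I$ with $t(a)\neq 0$, and the whole problem is concentrated in the degenerate case where $t$ vanishes identically on $I$. I would therefore argue by contradiction, assuming $I\cap A^G=0$, equivalently $t(I)=0$.

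Under hypothesis (1) I regard $I$ as a (non-unital) ring with the inherited $G$-action; then $I^G=I\cap A^G=0$, and $I$ is $n$-torsionfree since $(A,+)$ is. The substance of the Bergman--Isaacs theorem is exactly that these two facts force $I$ to be nilpotent, say $I^m=0$. Granting this, the two-sided ideal $\langle I\rangle=I+IA$ generated by $I$ is again $G$-stable (because ${}^{g}(IA)={}^{g}I\cdot{}^{g}A=IA$), is non-zero since it contains $I$, and is nilpotent: a short computation gives $(I+IA)^k\subseteq I^k+I^kA$, so $\langle I\rangle^{m}=0$. This contradicts the hypothesis that $A$ has no non-zero nilpotent $G$-stable ideal and settles case (1).

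Under hypothesis (2) the torsion assumption is gone, so the combinatorial mechanism above is unavailable and reducedness must take over. Reducedness already removes the obstruction exploited in case (1): if $J^2=0$ then each $x\in J$ has $x^2=0$ and hence $x=0$, so $A$ has no non-zero nilpotent ideals at all. When $A$ is a commutative domain the remedy is transparent, for the symmetric product $\prod_{g\in G}{}^{g}a$ is $G$-fixed, lies in $Aa\subseteq I$, and is non-zero, so it already witnesses $I\cap A^G\neq 0$. Kharchenko's contribution is to make this survive for arbitrary reduced (indeed semiprime) rings, where non-commutativity prevents a naive product of conjugates from being fixed and one must instead argue through the semiprime structure, passing if necessary to a ring of quotients built from central idempotents.

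The genuine obstacle in both cases is the same in spirit: extracting nilpotency, or a non-zero fixed element, from the single identity $t\equiv 0$. For case (1) I would approach the Bergman--Isaacs step by induction on $|G|$, strengthening the claim to ``$I^G$ nilpotent $\Rightarrow I$ nilpotent'' so that a proper non-trivial normal subgroup $N\trianglelefteq G$ can be used to factor the action: $G/N$ acts on $I^N$ with $(I^N)^{G/N}=I^G$, and chaining the inductive hypothesis first for $G/N$ and then for $N$ yields the result. This pushes the difficulty onto the groups admitting no such $N$; the cleanest irreducible instance is a cyclic group of prime order $p$, where $1+\sigma+\cdots+\sigma^{p-1}$ acts as zero, and wringing nilpotency out of this lone multilinear relation in the absence of $p$-torsion is the delicate counting argument at the heart of Bergman--Isaacs, and is where the real difficulty lies.
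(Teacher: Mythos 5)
Your surrounding reduction is correct, and it is worth noting that it supplies exactly what the paper omits: the paper gives no proof of this theorem at all, deferring case (1) to \cite{BergmanIsaacs, Puczylowski} and case (2) to \cite{Kharchenko}. Your bridge from those references to the statement is the standard one and is sound: by the preceding Lemma, retractability of ${_{A*G}A}$ amounts to $I\cap A^G\neq 0$ for every non-zero $G$-stable left ideal $I$; the trace $t(a)=\sum_{g\in G}{}^{g}a$ maps $I$ into $I\cap A^G$; if $I\cap A^G=0$ then $I$ is an $n$-torsion-free (non-unital) ring with $I^G=0$, the full Bergman--Isaacs theorem ($R^G$ nilpotent forces $R$ nilpotent for $|G|$-torsion-free $R$) yields $I^m=0$, and your verification that $I+IA$ is a non-zero $G$-stable two-sided ideal with $(I+IA)^k\subseteq I^k+I^kA$ (using $AI\subseteq I$) is correct, giving the contradiction with hypothesis (1).

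The genuine gap is the one you flag yourself: the entire substance of both cases is invoked rather than proved. For (1), your proposed induction along a normal subgroup is fine as far as it goes ($(I^N)^{G/N}=I^G$ holds because $N\trianglelefteq G$ makes $G$ act on $I^N$, and $n$-torsion-freeness passes to every divisor of $n$), but it reduces the problem only to finite \emph{simple} groups, of which the cyclic groups of prime order are merely the easiest family; for a nonabelian simple $G$ the reduction yields nothing and you face the original problem in full, which is why the actual Bergman--Isaacs argument (and Puczy\l{}owski's short proof) runs an induction that treats all finite groups uniformly rather than descending a composition series. So even granting the prime-cyclic ``multilinear relation'' step, your plan would still be incomplete. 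For (2) you offer no mechanism beyond the commutative-domain warm-up, and here a small slip matters: your claim that $I\cap A^G=0$ is ``equivalently $t(I)=0$'' uses $n$-torsion-freeness (a non-zero $a\in I\cap A^G$ has $t(a)=na$), and it fails precisely in the reduced case, where $n$ may be zero in $A$ and the trace can vanish identically on a non-zero stable ideal -- this is exactly why Kharchenko's theorem cannot be reached by any trace argument and requires a genuinely different proof, none of which is sketched. In short: the derivation of the stated theorem from the literature is done correctly and in the only reasonable way, but the cited theorems themselves, which are the statement's real content, remain unproved in your proposal.
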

For the proof of (1) see \cite{BergmanIsaacs, Puczylowski} for the proof of (2) see \cite{Kharchenko}.

\subsection{Involutions}\label{involution}
Let $A$ be an $R$-algebra. An involution of $A$ is an $R$-linear map $\ast: A\rightarrow A$ with $a\mapsto a^\ast$ that is an anti-algebra homomorphism and has order $2$, i.e. $(ab)^\ast = b^\ast a^\ast$ and $(a^\ast)^\ast=a$ for all $a,b\in A$. An element $a\in A$ is called symmetric (respectively anti-symmetric) with respect to $\ast$ if $a^\ast=a$ (respectively. $a^*=-a$). Ideals that are stable under the involution $\ast$ are called $\ast$-ideals.
Consider the subalgebra $B$ of $\mathrm{End}_R(A)$ generated by $\ast$ and all left multiplications $l_a$ for $a\in A$, i.e. 
$$B=\langle \{\ast\}\cup \{l_a \mid a\in A\}\rangle \subseteq \mathrm{End}_R(A).$$
Note that for any $a,b\in A$ one has
$$ r_a(b) = ba = (a^\ast b^\ast)^\ast = (\ast \circ l_{a^\ast} \circ \ast)(b).$$
Hence $r_a = \ast \circ l_{a^\ast} \circ \ast \in B$.  It is clear that $A$ becomes a left $B$-module by simply applying  $f \in B\subseteq \mathrm{End}_R(A)$, i.e. for any $f\in B$ and $a\in A$ set $f \cdot a := f(a)$. The left $B$-submodules of $A$ are stable under left and right multiplications $l_a$ and $r_a$ for any $a\in A$ and hence are two-sided ideals of $A$. Moreover they are stable under $\ast$. On the other hand any $\ast$-ideal is also a left $B$-submodule.

The algebra $B$ can be seen as a factor of a skew group algebra. Let $A^e=A{\otimes_R} A^{op}$ be the enveloping algebra of $A$ and consider the map
$\alpha: A^e \rightarrow A^e$ defined by $\alpha(a\otimes b) = b^\ast \otimes a^\ast$ for all $a,b\in A$. The map $\alpha$ is an automorphism of $A^e$, because for any $a,b,c,d\in A$:
$$\alpha \left((a\otimes b)(c\otimes d)\right) = \alpha\left( ac \otimes db  \right) = (db)^\ast  \otimes (ac)^\ast  = b^\ast d^\ast \otimes c^\ast a^\ast = 
\left( b^\ast \otimes a^\ast\right)\left( d^\ast \otimes c^\ast\right) = \alpha(a\otimes b)\alpha(c\otimes d).$$
As $\alpha$ is its own inverse it is an automorphism of order $2$. Let $G=\langle \alpha\rangle = \{id, \alpha \}$ and consider the (surjective) map 
$\psi: A^e * G \rightarrow B$ given by 
$(a\otimes b)\otimes id + (c\otimes d) \otimes \alpha \mapsto l_a\circ r_b  + l_c\circ r_d \circ \ast$ for all $a,b,c,d\in A$. Then $\psi$ is an algebra homomorphism. The calculations are easy but tedious and will be illustrated on the example of the product of $(1\otimes 1)\otimes  \alpha $ and $(a\otimes b)\otimes id$.
Note first that  for any $x\in A$:
$$l_{b^\ast}\circ r_{a^\ast}\circ \ast(x) = b^\ast x^\ast a^\ast = (axb)^\ast = \ast\circ l_a\circ r_b (x).$$
Hence
$$ \psi\left(( 1\otimes 1 \otimes \alpha)(a\otimes b \otimes id )\right) = \psi\left(b^\ast\otimes a^\ast \otimes \alpha\right) = l_{b^\ast}\circ r_{a^\ast}\circ \ast = \ast \circ l_a \circ r_b = \psi\left (1\otimes 1 \otimes \alpha)(a\otimes b \otimes id)\right).$$
Thus $B$ is a factor algebra of $A^e*G$.
For any left $A^e*G$-module $M$ one defines the submodule of central symmetric elements as 
$$Z(M;\ast) = Z(M)\cap M^G =  \{ m\in Z(M) \mid \alpha \cdot m = m\}.$$
For $M=A$ one obtains the central symmetric elements of $A$, i.e. $Z(A;\ast)=\{a\in Z(A)\mid a^\ast=a\}$.
Again one has $R$-isomorphisms for any left $A^e* G$-module $M$:
\begin{equation}\psi_M: \mathrm{Hom}_{A^e* G}(A,M) \longrightarrow Z(M;\ast)  \qquad \mbox{ given by } \qquad f \mapsto  (1)f \qquad \forall f\in\mathrm{Hom}_{A^e*G}(A,M).
\end{equation}
with inverse map
\begin{equation}\psi_M^{-1}: Z(M; \ast)  \longrightarrow \mathrm{Hom}_{A^e*G}(A,M) \qquad \mbox{ given by } \qquad m \mapsto [a\mapsto a\cdot m] \qquad \forall m\in Z(M;\ast).\end{equation}
In particular $\psi_A:\mathrm{End}_{A^e*G}(A) \simeq Z(A;\ast)$ is an isomorphism of $R$-algebras and the bijections $\psi_M$ are left $Z(A; \ast)$-linear maps.

\begin{lem} $A$ is a retractable $A^e*G$-module if and only if every non-zero $\ast$-ideal contains a non-zero central symmetric element.
\end{lem}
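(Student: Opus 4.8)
The plan is to follow the same pattern used for the bimodule, derivation and group-action lemmas preceding it: translate retractability into a non-vanishing statement about homomorphism spaces, transport it across the isomorphism $\psi_N$, and then read off a concrete condition inside $A$. By definition, $A$ is a retractable $A^e*G$-module precisely when $\mathrm{Hom}_{A^e*G}(A,N)\neq 0$ for every non-zero $A^e*G$-submodule $N$ of $A$. Since the excerpt has already identified the $A^e*G$-submodules of $A$ as exactly the $\ast$-ideals, it suffices to show that $\mathrm{Hom}_{A^e*G}(A,N)\neq 0$ for every non-zero $\ast$-ideal $N$ if and only if every non-zero $\ast$-ideal contains a non-zero central symmetric element.

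First I would invoke the isomorphism $\psi_N:\mathrm{Hom}_{A^e*G}(A,N)\to Z(N;\ast)$ established above, which is valid for any left $A^e*G$-module and in particular for the submodule $N$. This immediately reduces the claim to showing that $Z(N;\ast)\neq 0$ for every non-zero $\ast$-ideal $N$ is equivalent to the stated condition. The second step is then to compute $Z(N;\ast)=Z(N)\cap N^G$ concretely when $N$ is viewed as an $A^e*G$-submodule of $A$. Using the surjection $\psi:A^e*G\to B$, the element $(1\otimes 1)\otimes\alpha$ acts on $A$ as $l_1\circ r_1\circ\ast=\ast$, so $\alpha\cdot m=m^\ast$; hence $N^G$ is the set of symmetric elements of $N$. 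On the other hand $Z(N)$, the elements of $N$ commuting with every $a\in A$, is simply $N\cap Z(A)$. Combining these gives $Z(N;\ast)=N\cap Z(A;\ast)$, the central symmetric elements of $A$ lying in $N$.

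Putting the pieces together, $\mathrm{Hom}_{A^e*G}(A,N)\cong N\cap Z(A;\ast)$, and this space is non-zero exactly when $N$ contains a non-zero central symmetric element. Quantifying over all non-zero $\ast$-ideals $N$ yields the desired equivalence. The computation is entirely routine given the machinery already in place; the only point deserving a moment's care is the identification $\alpha\cdot m=m^\ast$, which must be read off correctly from the definition of $\psi$ so that $Z(N;\ast)$ is confirmed to be the central \emph{symmetric} part of $N$ and not some larger subset.
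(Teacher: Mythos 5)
Your proposal is correct and follows precisely the argument the paper intends: the lemma is stated without an explicit proof, mirroring the bimodule case where the paper's proof consists of identifying the submodules (there two-sided ideals, here $\ast$-ideals) and applying the bijection $\psi_I$, and your verification that $\alpha$ acts as $\ast$ and that $Z(N;\ast)=N\cap Z(A;\ast)$ fills in exactly the routine details the paper suppresses. No discrepancies to report.
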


Using Rowen's theorem we have the following:

\begin{cor} Let $\ast$ be an involution of an $R$-algebra $A$. If $A$ is a semiprime PI-algebra, then $A$ is a retractable $A^e*G$-module.
\end{cor}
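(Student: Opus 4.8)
The plan is to reduce everything to the Lemma immediately preceding the Corollary, which characterises retractability of $A$ as an $A^e*G$-module by the property that every non-zero $\ast$-ideal of $A$ contains a non-zero central symmetric element. So I would fix an arbitrary non-zero $\ast$-ideal $I$ of $A$, and reformulate the goal as: exhibit a non-zero element of $I\cap Z(A;\ast)$. Once such an element is produced for every such $I$, the Lemma delivers the conclusion directly.

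First I would exploit the hypothesis that $A$ is a semiprime PI-algebra. Since $I$ is in particular a non-zero two-sided ideal, Rowen's Theorem~\ref{RowenTheorem} tells us that $A$ has a large centre, so there is a non-zero central element $z\in I\cap Z(A)$. The obstacle is that $z$ need not be symmetric, so in general $z\notin Z(A;\ast)$; the real content of the proof is to manufacture a \emph{symmetric} central element out of $z$ without losing non-triviality, and this is the step I expect to require the most care. As a preparatory observation I would note that $z^\ast$ is again central: a direct computation using that $\ast$ is an involutive anti-automorphism turns $za=az$ into $z^\ast a=az^\ast$ for all $a\in A$. Moreover $z^\ast\in I$ because $I$ is $\ast$-stable.

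The decisive step is then to consider the two symmetric central candidates $s:=z+z^\ast$ and $p:=zz^\ast$. Both lie in $I$, both are central (as sum and product of central elements), and both are symmetric, since $s^\ast=z^\ast+z=s$ and $p^\ast=(z^\ast)^\ast z^\ast=zz^\ast=p$ using the anti-multiplicativity of $\ast$. The main obstacle is precisely to rule out that $s$ and $p$ vanish simultaneously, and this is exactly where semiprimeness enters. I would argue by contradiction: if $s=0$ then $z^\ast=-z$, whence $p=-z^2$; if in addition $p=0$ then $z^2=0$, and since $z$ is central the two-sided ideal $zA$ satisfies $(zA)^2=z^2A=0$, so by semiprimeness $zA=0$ and hence $z=0$, contradicting the choice of $z$. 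Therefore at least one of $s$ and $p$ is a non-zero element of $I\cap Z(A;\ast)$. Applying the preceding Lemma to this conclusion, valid for every non-zero $\ast$-ideal $I$, then yields that $A$ is retractable as a left $A^e*G$-module, as required.
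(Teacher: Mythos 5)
Your proof is correct and takes essentially the same route as the paper: both extract a non-zero central element $z\in I$ via Rowen's theorem, pass to the symmetric candidates $z+z^\ast$ and (when that vanishes, so $z^\ast=-z$) a square of $z$, and invoke semiprimeness to rule out that both are zero. The only differences are cosmetic --- you verify explicitly that $z^\ast$ is central and spell out $z^2\neq 0$ via the ideal $zA$, steps the paper leaves implicit.
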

\begin{proof}
Let $I$ be a non-zero $\ast$-ideal. By Rowen's Theorem \ref{RowenTheorem}, $I$ contains a non-zero central element, say $a\in I$. Since $I$ is $\ast$-stable, $a^\ast \in I$.
Thus $a + a^\ast$ is a central symmetric element of $I$. If $a+a^\ast=0$, then $a^\ast = -a$ and $a^2$ is a central symmetric element as $(a^2)^\ast = (-a)^2 = a^2.$  Note that $a^2\neq 0$ as $a$ is non-zero and central and $A$ is semiprime.
 
\end{proof}

\section{Open Problems}
If a group $G$ acts on an algebra $A$ by automorphisms, then $A$ becomes a module over the skew group ring $A*G$ as well as over the group algebra $R[G]$. If a Lie algebra $\g$ acts on $A$ by derivations, then $A$ becomes a module over $A\# U(\g)$ as well as over the universal enveloping algebra $U(\g)$. Both algebras $R[G]$ and $U(\g)$ are examples of Hopf algebras and the constructions $A*G$ respectively $A\#U(\g)$ are so-called smash products. A Hopf algebra $H$ is an algebra such that  there exist algebra maps
$\Delta: H\rightarrow H\otimes H$ (called the comultiplication of $H$) and $\epsilon:H\rightarrow R$ (called the count of $H$) such that the following diagrams commute:

\[\xymatrix{
H \ar@{->}[r]^\Delta \ar@{->}[d]_\Delta & H\otimes H\ar@{->}[d]^{\Delta\otimes 1}\\
H\otimes H \ar@{->}[r]_{1\otimes \Delta} & H\otimes H \otimes H}\qquad
\xymatrix{R\otimes H & H\otimes H \ar@{->}[l]_{\epsilon \otimes 1}\ar@{->}[r]^{1\otimes \epsilon} & H\otimes R \\
 & H \ar@{->}[lu]^{\simeq}  \ar@{->}[ru]_{\simeq}  \ar@{->}[u]^\Delta & 
}\]

For an element $h\in H$ its comultiplication $\Delta(h)$ is an element of $H\otimes H$. It is common to use the so-called Sweedler's notation enumerating symbolically the first and second tensorand by writing $\Delta(h)= \sum_{(h)} h_1\otimes h_2 \in H\otimes H$.

The ring of $R$-linear endomorphisms of $\mathrm{End}_R(H)$ of a Hopf algebra $H$ has another ring structure as the usual given by the convolution product which associates to two endomorphisms $f,g$ of $H$ the endomorphisms $f\ast g = \mu\circ (f\otimes g) \circ \Delta$ where $\mu$ denotes the multiplication of $H$. To obtain a Hopf algebra one also requires that the identity map has an inverse in $\mathrm{End}_R(H)$ with respect to the convolution product.  This inverse is usually denoted by $S$ and called the antipode of $H$. Equivalently there should exist an endomorphism $S$ satisfying
$$ \mu \circ (id\otimes S)\circ \Delta = \eta\circ \epsilon = \mu \circ (S\otimes id) \circ \Delta$$
where $\eta:R\rightarrow H$ denotes the map $r\mapsto r1_H$ for all $r\in R$.
A Hopf algebra $H$ acts on an $R$-algebra $A$ if $A$ is a left $H$-module and an algebra in the category of left $H$-modules. The later means that the multiplication  $m:A{\otimes_R} A\rightarrow A$ and the unit map $R\rightarrow A$ with $r\mapsto r1_A$ are maps of left $H$-modules. Note that due to the comultiplication the category of left $H$-modules is closed under tensor products, i.e. it is a tensor category. For left $H$-modules $N$ and $M$, elements $x\in N$ and $y\in M$ and $h\in H$ with $\Delta(h)=\sum_{(h)} h_1\otimes h_2 \in H\otimes H$ one sets 
$ h\cdot (x\otimes y ) = \sum_{(h)} (h_1 \cdot x)\otimes (h_2 \cdot y).$  The base ring $R$ becomes a left $H$-module by $h\cdot r = \epsilon(h)r$ for all $h\in H, r\in R$. Hence for $H$ to act on $A$, $A$ has to be a left $H$-module and the following conditions have to be fulfilled for all $a,b\in A$ and $h\in H$.
$$ h\cdot (ab) = \sum_{(h)} (h_1\cdot a ) ( h_2\cdot b) \qquad \mbox{ and } \qquad h\cdot 1_A = \epsilon(h)1_A.$$
The smash product of $A$ and $H$ is denoted by $A\# H$ and defined on the tensor product $A {\otimes_R} H$ with multiplication given by
$$ (a\otimes h)\cdot (b\otimes g) = \sum_{(h)} a(h_1\cdot b) \otimes h_2g \qquad \forall a,b\in A, h, g \in H.$$
Then $A$ becomes a cyclic left $A\# H$-module by the action $(a\otimes h)\bullet b = a(h\cdot b)$ for all $a,b\in A$, $h\in H$.
For any left $A\# H$-module $M$ one defines the submodule of $H$-invariants of $M$ as 
 $$M^H = \{ m\in M \mid h\cdot m = \epsilon(h)m \: \forall h\in H\}.$$
 As in the previous sections one has a for any left $A\# H$-module $M$ canonical maps:
\begin{equation}\psi_M: \mathrm{Hom}_{A\# H}(A,M) \longrightarrow M^H \qquad \mbox{ given by } \qquad f \mapsto  (1)f \qquad \forall f\in\mathrm{Hom}_{A\# H}(A,M).\end{equation}
with inverse map
\begin{equation}\psi_M^{-1}: M^H \longrightarrow \mathrm{Hom}_{A\# H}(A,M) \qquad \mbox{ given by } \qquad m \mapsto [a\mapsto a\cdot m] \qquad \forall m\in M^H.\end{equation}
In particular $\psi_A:\mathrm{End}_{A\# H}(A) \simeq A^H$ is an isomorphism of $R$-algebras and the bijections $\psi_M$ are left $A^H$-linear maps.

For a group $G$ and its group algebra $H=R[G]$,  the Hopf algebra structure of $H$ is given by the comultiplication $\Delta(g)=g\otimes g$, the counit $\epsilon(g)=1$ and the antipode $S(g)=g^{-1}$, for all $g\in G$. 
The group algebra $H$  acts on $A$ if there is a module action $H\otimes A \rightarrow A$, say by  $h\otimes a \mapsto h\cdot a$, for all $h\in H, a\in A$ and the two conditions from above are satisfied, i.e.
$$ g\cdot (ab) = (g\cdot a)(g\cdot b) \qquad \mbox{ and } \qquad g\cdot 1_A = \epsilon(g)1_A = 1_A \qquad \forall g\in G.$$
Hence if one denotes by $\alpha_g$ the map $a\mapsto \alpha_g(a)=g\cdot a$, then one sees from this two conditions that $\alpha_g$ is an endomorphism of rings. Since $G$ is a group and $A$ is supposed to be a left $H$-module, $\alpha_{g^{-1}}= \alpha_g^{-1}$ for all $g\in G$. It is easy to see that $A*G$ equals the smash product $A\# R[G]$.

For a Lie algebra $\g$ and its universal enveloping algebra  $H=U(\g)$ , the Hopf algebra structure of $H$ is given by the comultiplication 
 $\Delta(x)=1\otimes x + x\otimes 1$, the counit $\epsilon(x)=0$ and the antipode $S(x)=0$, for all $x\in \g$. 
The Hopf algebra $H$  acts on $A$ if there is a module action $H\otimes A \rightarrow A$, say by  $h\otimes a \mapsto h\cdot a$, for all $h\in H, a\in A$ and the two conditions from above are satisfied, i.e.
$$ x\cdot (ab) = (1\cdot a)(x\cdot b) + (x\cdot a)(1\cdot b) = a(x\cdot b) + (x\cdot a)b \qquad \mbox{ and } \qquad x\cdot 1_A = 0 \qquad \forall x\in \g.$$
Hence if one denotes by $\partial_x$ the map $a\mapsto \partial_x(a)=x\cdot a$, for $x\in \g$, then one sees from this two conditions that $\partial_x$ is a derivation of $A$. Hence the Hopf algebra $U(\g)$ acts on $A$ if each element acts as a derivation on $A$. The converse also holds. In particular if $\partial$ is a single derivation on $A$, $\g=R$ is the trivial Lie algebra and $U(\g)=R[x]$ is its universal enveloping algebra, one obtains an action of $H=R[x]$ on $A$ by setting for any polynomial $f(x)=\sum_{i=0}^n r_ix^i$ and element $a\in A$: $f(x) \cdot a  = \sum_{i=0}^n r_i \partial^i(a).$
Looking at the smash product $A\# R[x]$ one sees that the multiplication coincides with that of the differential operator ring $A[x;\partial]$, because
$$ (1_A \otimes x)(b\otimes 1_H) = 1_H (1_H\cdot b) \otimes x1_H + 1_H (x \cdot b) \otimes 1_H 1_H = b\otimes x + \partial(b) \otimes 1_H.$$
Identifying $A{\otimes_R} R[x]$ with $A[x]$ as $R$-modules, we obtain our usual multiplication rule.

\subsection{Cohen's problem}
Miriam Cohen asked in 1985 whether the smash product $A\# H$ is a semiprime ring provided $A$ is semiprime and $H$ is a semisimple Hopf algebra acting on $A$ (see \cite{cohen85}). The questions is still open up to my knowledge. The semisimple condition on $H$ implies that $A$ is a projective $A\# H$-module. Hence the map $\varphi: A\# H\rightarrow A$ with $a\# h \mapsto a\epsilon(h)$ splits by the map $\psi:A\rightarrow A\# H$ such that $e=(1_A)\psi$  is an idempotent in $(A\# H)^H$. Moreover for any $H$-stable left ideal $I$ of $A$ one has that $(I)\psi = (I\# 1)e$ is a left ideal of $A\# H$ isomorphic to $I$.
If  $A\# H$ is semiprime and $A$ projective as $A\# H$-module then 
$(I\# 1)e(I\# 1)e$ would be non-zero for any non-zero $H$-stable left ideal $I$ of $A$. Hence $0\neq e(I\# 1)e = e(I)\psi = (e\bullet I)\psi$ shows that $e\bullet I$ is non-zero. As $e\in (A\# H)^H$ one also has $0\neq e\bullet I \in A^H\cap I \simeq \mathrm{Hom}_{A\# H}(A,I)$. 

\begin{cor} If $A\# H$ is semiprime and $A$ is projective as left $A\# H$-module, then $A$ is a retractable left $A\# H$-module.
\end{cor}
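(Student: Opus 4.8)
The plan is to produce, inside every non-zero $A\#H$-submodule of $A$, a non-zero $H$-invariant element, and then convert this into a non-zero homomorphism using the machinery already set up. Recall that the non-zero $A\#H$-submodules of $A$ are exactly the non-zero $H$-stable left ideals $I$ of $A$, and that the isomorphism $\psi_I:\mathrm{Hom}_{A\#H}(A,I)\simeq A^H\cap I$ reduces retractability to showing $A^H\cap I\neq 0$ for each such $I$. So the whole proof comes down to exhibiting a non-zero element of $A^H\cap I$.

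First I would record the consequences of projectivity. The cyclic presentation realises $A$ as $(A\#H)\bullet 1_A$, i.e. the module surjection $\varphi:A\#H\to A$, $a\# h\mapsto a\epsilon(h)=( a\#h)\bullet 1_A$, and projectivity of $A$ furnishes a left $A\#H$-linear section $\psi:A\to A\#H$ with $\psi\varphi=\mathrm{id}_A$. Setting $e=(1_A)\psi$, the identity $a=(a\#1)\bullet 1_A$ and left linearity of $\psi$ give $(a)\psi=(a\#1)e$ for all $a\in A$; since $\varphi(e)=1_A$ means $e\bullet 1_A=1_A$, the relation $(e\bullet 1_A)\psi=e\cdot(1_A)\psi$ forces $e=e^2$. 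One also verifies $e\in(A\#H)^H$, i.e. $(1\# h)e=\epsilon(h)e$ for all $h\in H$, and it is precisely this invariance that will push the output element into $A^H$.

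Next I would transport $I$ into $A\#H$. Because $\psi$ is left $A\#H$-linear, for $x\in A\#H$ and $i\in I$ one has $x\cdot(i)\psi=(x\bullet i)\psi\in(I)\psi$ (using that $I$ is a submodule), so $(I)\psi=(I\#1)e$ is a \emph{left ideal} of $A\#H$, and $\psi$ restricts to an isomorphism $I\simeq(I)\psi$; in particular $(I\#1)e\neq 0$ when $I\neq 0$. Now semiprimeness enters: in a unital semiprime ring a non-zero left ideal $L$ cannot satisfy $L^2=0$, since then $LA$ would be a non-zero nilpotent two-sided ideal (non-zero as $L\subseteq LA$). Applying this to $L=(I\#1)e$ gives $(I\#1)e(I\#1)e\neq 0$, and grouping the product as $(I\#1)\cdot\bigl[e(I\#1)e\bigr]$ shows $e(I\#1)e\neq 0$. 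Rewriting with $(I)\psi=(I\#1)e$ and left linearity yields $e(I\#1)e=e\cdot(I)\psi=(e\bullet I)\psi$, so injectivity of $\psi$ gives $e\bullet I\neq 0$. Finally $e\bullet I\subseteq I$ as $I$ is a submodule, and $e\bullet I\subseteq A^H$ as $e\in(A\#H)^H$, so $0\neq e\bullet I\subseteq A^H\cap I$, which is exactly what was needed.

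The one genuinely delicate point is the bridge in the previous paragraph: semiprimeness is a two-sided, ring-theoretic hypothesis, whereas retractability is a statement about the one-sided module action. The invariant idempotent $e$ is what couples the two, turning the ring-theoretic non-vanishing $(I\#1)e(I\#1)e\neq 0$ into the module-theoretic non-vanishing $e\bullet I\neq 0$. I therefore expect the care to lie in checking that $e$ is simultaneously idempotent and $H$-invariant and that $(I\#1)e$ is a genuine left ideal; once those are in place the rest is the formal machinery of $\psi_M$ and $A^H$ already established.
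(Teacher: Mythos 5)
Your proof is correct and takes essentially the same route as the paper's own argument: projectivity splits $\varphi:A\# H\rightarrow A$ by an $A\# H$-linear $\psi$ with idempotent $e=(1_A)\psi\in(A\# H)^H$, the submodule $I$ is transported to the left ideal $(I)\psi=(I\# 1)e$, and semiprimeness forces $(I\# 1)e(I\# 1)e\neq 0$, hence $e(I\# 1)e=(e\bullet I)\psi\neq 0$ and $0\neq e\bullet I\subseteq A^H\cap I\simeq \mathrm{Hom}_{A\# H}(A,I)$. The only difference is that you spell out the routine verifications the paper leaves implicit (idempotency and $H$-invariance of $e$, that $(I)\psi$ is a left ideal, and that a non-zero left ideal of a semiprime unital ring cannot square to zero), all of which check out.
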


Of course this is true much more generally, namely for torsionless modules over semiprime rings, due to Amitsur (see \cite[Theorem 27, Corollary 21]{amitsur}):

\begin{lem}[Amitsur] Any left $A$-module over a semiprime ring $A$ that can be embedded into a direct product of copies of $A$ is retractable.
\end{lem}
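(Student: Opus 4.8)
The plan is to use the bracket map $[\,-,-\,]\colon M^\ast\otimes_A M\to S$ of the standard Morita context recalled above, together with the elementary characterisation of semiprimeness. Writing $M^\ast=\mathrm{Hom}_A(M,A)$ and keeping homomorphisms on the right, an embedding of $M$ into a product $\prod_{i\in I}A$ of copies of $A$ amounts to a family $\{f_i\}_{i\in I}\subseteq M^\ast$ that \emph{separates points}, i.e. for every $0\neq m\in M$ there is an index $i$ with $(m)f_i\neq 0$; this is the only consequence of torsionlessness I would use.

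I would then fix a non-zero submodule $N\subseteq M$, choose $0\neq n\in N$, and pick $i$ with $a:=(n)f_i\neq 0$. The map I want to produce is $[f_i,m]$ for a cleverly chosen $m\in N$: recall that $(x)[f_i,m]=\big((x)f_i\big)m$ is left $A$-linear and, when $m\in N$, has image in $Am\subseteq N$, so it defines an element of $\mathrm{Hom}_A(M,N)$. It therefore remains only to choose $m$ so that this homomorphism is non-zero.

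This is exactly where semiprimeness of $A$ enters, in the form ``$a\neq 0\Rightarrow aAa\neq 0$''. I would take $c\in A$ with $aca\neq 0$ and set $m:=cn\in N$. Evaluating $[f_i,m]$ at $n$ gives $(n)[f_i,m]=a(cn)=(ac)n$, and applying $f_i$ once more, by left $A$-linearity, yields $\big((ac)n\big)f_i=(ac)\big((n)f_i\big)=aca\neq 0$. Hence $(n)[f_i,m]\neq 0$, so $[f_i,m]$ is a non-zero homomorphism $M\to N$, which proves that $M$ is retractable.

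The one delicate point — and the sole place the hypothesis on $A$ is needed — is the passage from $a$ to the correcting element $c$. The naive choice $m=n$ would, evaluated at $n$, reach only $a^2$, which may well vanish in a semiprime ring, since such rings can contain nilpotent elements; it is precisely the stronger conclusion $aAa\neq 0$ that rescues the argument. Taking the ring to be $A\#H$ and the module to be $A$ then recovers the preceding Corollary, since a projective module is a direct summand of a free module and hence torsionless.
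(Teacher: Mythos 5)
Your proof is correct, and it is essentially the argument the paper itself relies on: the paper states the lemma with a citation to Amitsur rather than a proof, but its earlier discussion of the standard Morita context shows exactly that a non-degenerated module is retractable via the bracket map $[f,m]\colon x\mapsto ((x)f)m$ landing in $Am\subseteq N$, and your computation with $m=cn$ and $aca\neq 0$ is precisely the verification that semiprimeness makes a torsionless module non-degenerated. Your closing remark correctly reproduces the paper's derivation of the preceding Corollary (projective over $A\# H$ implies torsionless, so the lemma applies when $A\# H$ is semiprime).
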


Hence if Cohen's question would have a positive answer, then $A$ semiprime and $H$ being semisimple acting on $A$ would imply that $A$ is a retractable $A\# H$-module.

\begin{question} Let $H$ be a semisimple Hopf algebra acting on a semiprime algebra $A$. Is $A$ a retractable left $A\# H$-module, or in other words, does every non-zero $H$-stable left ideal intersect $A^H$ non-trivially ?
\end{question}

Recalling Bergman and Isaacs Theorem \ref{Bergman-Isaacs} we see that an answer to the question above would generalise their Theorem and would be "half way" towards a positive answer to Cohen's question. For more on Cohen's question I would like to refer to my recent survey \cite{Lomp_Survey}.

\subsection{Primness of endomorphism rings}

There are many ways to carry the idea of a prime ring to modules. Bican et al. in \cite{bican} defined a product on the lattice of submodules $\cL$ of a left $A$-module $M$, by defining for any $N,K \in \cL$: 
$$ N\ast K := N\mathrm{Hom}_A(M,K) = \sum_{f:M\rightarrow K} (N)f .$$
Together with this product, $\cL$ becomes a partially ordered groupoid, i.e. $\cL$ is a partially ordered set by inclusion and the binary operation  $\ast$ satisfies  $N\ast K \subseteq L \ast K$ and $K\ast N \subseteq K\ast L$, whenever  $K,L,N\in\cL$ and $N\subseteq L$. Note that in general the $\ast$-product is not associative. 
The notion of a prime element is naturally carried over to any partially ordered groupoid (see \cite{Birkhoff}), i.e. $P\in \cL$ is prime if $N\ast K \subseteq P$ implies $N\subseteq P$ or $K\subseteq P$, and the module $M$ is called $\ast$-prime if $0$ is a prime element in the partially ordered groupoid $(\cL, \ast)$ (see \cite{bican, lomp_prime}). By definition, if $M$ is $*$-prime, then $0\neq M\ast N =M\mathrm{Hom}_A(M,N)$ for all $0\neq N\subseteq M$. Hence $M$ is retractable. On the other hand if $M$ is retractable and $\mathrm{End}_A(M)$ is a prime ring, then it is not difficult to see that $M$ is $\ast$-prime (see \cite[4.1]{lomp_prime}). However it had been left open whether in general a $\ast$-prime module must have a prime endomorphism ring.
\begin{question}
Does a $\ast$-prime module have a prime endomorphism ring ?
\end{question}
 Several positive results were obtained in \cite{BaziarLomp}. In particular if the endomorphism ring of $M$ is commutative, then the answer is yes (see \cite[2.3]{BaziarLomp}). This applies in particular to the case where $M=A$ is an algebra considered as a left $A^e$-module. As $\mathrm{End}_{A^e}(A)\simeq Z(A)$ is the centre of $A$, we have that $A$ is $\ast$-prime as left $A^e$-module if and only if $A$ has a large centre which is an integral domain. An analogous statement hold for algebras with involution.
 Furthermore it has been shown in \cite{BaziarLomp} that if a module $M$ is semi-projective or not singular, then the answer is also yes. Thus if a semisimple Hopf algebra $H$ acts on $A$, then $A$ is projective as left $A\# H$-module and $A$ is a $\ast$-prime left $A\# H$-module if and only if $A^H$ is large in $A$ and also a prime ring. This applies also to the case where $H=R[G]$ with $G$ a finite group acting on $A$ such that $n=|G|$ is invertible in $R$.
Another instance where the general module theoretic result can be applied is in case of a locally nilpotent derivation  $\partial$ on $A$ over a field $R$ of characteristic $0$ such that there exists $x\in A$ with $\partial(x)=1$. Then $A$ is a self-projective $A[x;\partial]$-module and thus $A$ is $\ast$-prime as left $A[x;\partial]$-module if and only if $A^\partial$ is a prime ring (since the locally nilpotency of $\partial$ implies the retractability of $A$).
However in general it is unknown whether $A^\partial$ is always prime if $A$ is a $\ast$-prime left $A[x;\partial]$-module.

\begin{question} Let $\partial$ be any derivation on $A$. Is it true that $A^\partial$ is a prime ring provided $A$ is a $\ast$-prime $A[x;\partial]$-module?
\end{question}

Zelmanowitz' weakly compressible modules $M$ are precisely those with $0$ being a semiprime element in the partially ordered groupoid $(\cL, \ast)$, i.e. if $N\ast N \subseteq 0$, then $N=0$. This means that for any non-zero submodule $N$ of $M$ there exists a homomorphism $f:M\rightarrow N$ with $(N)f\neq 0$. Clearly the notion of weakly compressible modules generalise the notion of $\ast$-prime modules as well as the notion of compressible modules. Furthermore weakly compressible modules are obviously retractable.
Alternatively a module $M$ is called \emph{semiprime} if any essential submodule of $M$ cogenerates $M$. Having in mind that $\ast$-prime modules $M$  can be characterised by the property that any non-zero submodule $N$ of $M$  cogenerates $M$, one sees that also semiprime modules are a generalisation of $\ast$-prime modules. It is not difficult to see that weakly compressible modules are semiprime (see \cite[5.5]{lomp_prime}) and it is an open question whether the converse holds:

\begin{question}
Is a semiprime module weakly compressible?
\end{question}

This question has been considered in \cite{Vedadi} and remotely also in \cite{SmithVedadi2} and it seems that it is important to know whether 
semiprime modules are retractable. Hence we might ask:

\begin{question}
Are semiprime modules retractable? 
\end{question}

What can be said about the cases mentioned above, e.g. if $M=A$ and $H$ is a Hopf algebra acting on $A$ ?

\subsection{Zelmanowitz' problem}
A \emph{critically compressible} module is a compressible module that cannot be embedded into any of its proper factor modules.
The following question arose in Zelmanowitz' papers \cite{Zelmanowitz, Zelmanowitz2}:
\begin{question}\label{ZQuest} Is a  compressible uniform module whose nonzero endomorphisms are monomorphisms a critically compressible module?
\end{question}
Attempts to answer this question have been made in \cite{RodriguesSantana} where it has been shown that the hypothesis of the
 question are equivalent to $M$ being a uniform retractable module whose endomorphism ring is a domain. Furthermore the following result has been shown in \cite[Proposition 3.1]{BorgesLomp}.

\begin{prop}[Borges-Lomp, 2011] 
Let $M$ be a left $A$-module with endomorphism ring $S$ and self-injective hull $\widehat{M}$.
Then  $M$ is critically compressible if and only if $M$ is retractable, $S=\mathrm{End}(M)$ is a left Ore domain and 
$\mathrm{End}(\widehat{M}) = \mathrm{Frac}(S)$ is the left division ring of fractions of $S$.
\end{prop}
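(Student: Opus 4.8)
The plan is to prove the two implications separately, reducing in each case to the notion of a \emph{monoform} module (one in which every nonzero homomorphism from a submodule into $M$ is injective) and to the endomorphism ring of the $\sigma[M]$-injective hull $\widehat{M}$.

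First I would treat the converse, which is the easier direction. Assume $M$ is retractable, $S$ is a left Ore domain and $T:=\mathrm{End}(\widehat{M})=\mathrm{Frac}(S)$, so in particular $T$ is a division ring. I claim $M$ is monoform: given $0\neq N\subseteq M$ and $0\neq f:N\to M$, one injects $N\hookrightarrow\widehat{M}$ and extends $f$ along this inclusion to some $\hat f\in T$ by the $M$-injectivity of $\widehat{M}$; since $\hat f$ restricts to $f\neq 0$ it is nonzero, hence invertible in the division ring $T$, hence injective, so $f$ is injective. Monoformness gives at once that $M$ is uniform and that every nonzero element of $S$ is monic. Retractability then yields compressibility: for $0\neq N\subseteq M$ any nonzero $f\in\mathrm{Hom}_A(M,N)$ is, as an element of $S$, injective, so $f$ embeds $M$ into $N$. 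Finally $M$ cannot embed into a proper factor $M/K$ with $K\neq 0$: from an embedding $\theta:M\hookrightarrow M/K$ one forms $N:=\rho^{-1}(\mathrm{im}\,\theta)\subseteq M$, where $\rho:M\to M/K$ is the projection, and composing the surjection $\rho|_N$ with $\theta^{-1}$ produces a nonzero homomorphism $N\to M$ with kernel $K\neq 0$, contradicting monoformness. Hence $M$ is critically compressible.

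For the forward implication, assume $M$ is critically compressible. Compressibility immediately gives retractability. The key first step is that $M$ is monoform: if some $0\neq f:N\to M$ had kernel $L\neq 0$, then $\mathrm{im}\,f\cong N/L$ sits inside $M/L$, and composing a compressibility embedding $M\hookrightarrow\mathrm{im}\,f$ with this inclusion embeds $M$ into the proper factor $M/L$, a contradiction. From monoformness one gets that $M$ is uniform and that $S$ is a domain (a nonzero non-injective endomorphism $s$ would, via a compressibility embedding $M\hookrightarrow\mathrm{im}\,s\cong M/\ker s$, again embed $M$ into a proper factor). Passing to the injective hull, $\widehat{M}$ is uniform, so $T$ is local and an element of $T$ is a unit exactly when it is injective. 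To realise $T$ as $\mathrm{Frac}(S)$ I would (i) embed $S\hookrightarrow T$ by extending endomorphisms (uniqueness of the extension following once $T$ is a domain, since two extensions differ by a map with essential kernel), and (ii) show every $g\in T$ has the form $s^{-1}t$ with $s,t\in S$ and $s\neq 0$: the submodule $N:=(M)g^{-1}\cap M$ is essential in $\widehat{M}$, hence nonzero, so a compressibility embedding $s:M\hookrightarrow N$ exists, and then $sg$ maps $M$ into $M$, whence $sg=t\in S$ and $g=s^{-1}t$. A domain $S$ admitting such a division overring in which every element is a left fraction is necessarily left Ore with $T\cong\mathrm{Frac}(S)$.

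The main obstacle is the heart of steps (i) and (ii) above, namely proving that $T=\mathrm{End}(\widehat{M})$ is genuinely a \emph{division} ring, i.e. that every nonzero endomorphism of $\widehat{M}$ is injective; equivalently, that the $\sigma[M]$-injective hull of a monoform module is again monoform. I would argue that a nonzero $g\in T$ with $\ker g\neq 0$ either restricts, on the essential submodule $(M)g^{-1}\cap M$ of $M$, to a nonzero non-injective homomorphism into $M$ (contradicting monoformness of $M$), or else annihilates an essential submodule, in which case it produces inside $M$ a nonzero cyclic submodule arising as a quotient of a module in $\sigma[M]$ by an essential submodule. The latter must be excluded, and here I would invoke that monoform modules are nonsingular (polyform) in $\sigma[M]$. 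Establishing and applying this nonsingularity to kill the singular case cleanly is the step I expect to require the most care.
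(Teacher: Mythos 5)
The survey states this proposition without proof, simply citing \cite{BorgesLomp}, and your blind argument follows essentially the same route as that original proof: the Zelmanowitz-style reduction of critical compressibility to compressibility plus monoformness (whence uniformity and $S$ a domain), the identification of $T=\mathrm{End}(\widehat{M})$ as a division ring, and the use of a compressibility embedding $s:M\hookrightarrow (M)g^{-1}\cap M$ to write every $g\in T$ as $s^{-1}t$ with $s,t\in S$, which forces the left Ore condition and $T\simeq\mathrm{Frac}(S)$. The one step you flag as delicate --- that a monoform module is polyform, so that every endomorphism of $\widehat{M}$ with essential kernel vanishes and hence every nonzero element of $T$ is injective (thus a unit, by uniformity and self-injectivity of $\widehat{M}$) --- is precisely the standard polyform-module result in \cite{wisbauer96}, which is what the original proof invokes, so your plan is sound as it stands.
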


Thus a negative answer to question \ref{ZQuest} could be obtained through an example of a uniform retractable module whose endomorphism ring is a domain, but not a left Ore domain. We finish by asking this question for our algebra situations above. All mentioned examples were of the following form (see also \cite{Lomp_central}):
Let $A$ be an algebra over $R$. Let $B$ be a subalgebra of $\mathrm{End}_R(A)$ that contains all left multiplications $l_a$ for $a\in A$. Then $A$ becomes a left $B$-module by evaluating endomorphisms, i.e. $b\in B, a\in A: b\bullet a := b(a)$ and moreover $A$ can be considered a subalgebra of $B$ by $a\mapsto l_a$ for all $a\in A$. The map $\alpha: B\rightarrow A$ given by $b\mapsto (b)\alpha=b\bullet 1$ for all $b\in B$ is a surjective map of left $B$-modules, i.e. $A$ is a cyclic left $B$-module, and furthermore the inclusion $a\mapsto l_a$ lets $\alpha$ split as left $A$-modules, because $(l_a)\alpha=l_a(1)=a$ for all $a\in A$. Any left $B$-module $M$ can be naturally considered a left $A$-module and there exists a map
\begin{equation}\label{psi_general}
\psi_M: \mathrm{Hom}_{B}(A,M) \longrightarrow M \qquad \mbox{ given by } \qquad f \mapsto  (1)f \qquad \forall f\in\mathrm{Hom}_{B}(A,M).
\end{equation}
The image of $\psi_M$ can be identified with the subset 
$$\mathrm{Im}(\psi_M) = \{m \in M \mid b\bullet m = (b)\alpha m = (b\bullet 1)m\} =: M^B$$
which we call the submodule of $B$-invariants of $M$. In particular $A^B\simeq \mathrm{End}_B(A)$  is naturally isomorphic to the endomorphism ring of $A$ as left $B$-module. As in the cases above one also has an inverse of $\psi_M$ which is:
\begin{equation}\psi_M^{-1}: M^B \longrightarrow \mathrm{Hom}_{B}(A,M) \qquad \mbox{ given by } \qquad m \mapsto [a\mapsto a\bullet m] \qquad \forall m\in M^B.\end{equation}
As before one has that $A$ is a retractable left $B$-module if and only if $A^B$ is large in $A$, i.e. $A^B$ intersects all $B$-stable left ideals of $A$.

\begin{question}
Suppose that $A^B$ is a domain and large in $A$. If $A$ is a left uniform $B$-module, does it follow that $A^B$ is a left Ore domain.
\end{question}

In case $B=M(A)$ as in subsection \ref{bimodule} respectively  in case $B$ is the subalgebra of $\mathrm{End}_R(A)$ generated by the left multiplication and an involution $\ast$ as in subsection \ref{involution}, one obtains that $A^B=Z(A)$ respectively $A^B=Z(A;\ast)$ is commutative. Since a commutative domain is an Ore domain, the question above is obviously answered. However in case of a group action or an action by a derivation, $A^B$ might be  non-commutative and raises the following questions:

\begin{question} Let $\partial$ be a derivation on $A$ such that $A^\partial$ is a domain which is large in $A$. Is $A^\partial$ left Ore if  $A$ is a uniform left $A[x;\partial]$-module?
\end{question}
 Let $\sigma$ be an automorphism of $A$. Then $ A^G = \{a\in A \mid \sigma(a)=a\}=:A^\sigma$, where $G=\langle \sigma \rangle$.
 
\begin{question} Let $\sigma$ be an automorphism such that  $ A^\sigma$ is a domain which is a large in $A$.
Is $A^\sigma$ left Ore if $A$ is a uniform left $A[x;\sigma]$-module (respectively left $A[x^{\pm 1}; \sigma]$-module)?
\end{question}

\begin{bibdiv}
 \begin{biblist}

\bib{amitsur}{article}{
	Author = {Amitsur, S.A.},
	Journal = {J. Algebra},
	Pages = {273-298},
	Title = {Rings of quotients and Morita Contexts.},
	Volume = {12},
	Year = {1971}
	}

\bib{BaziarLomp}{article}{
Author={ M. {Baziar} and C. {Lomp}},
Title={{Endomorphism rings of modules over prime rings}},
status={submitted},
note={arXiv:1207.4228v1},
year={2012}
}

\bib{BergmanIsaacs}{article}{
    Author = {G.M. {Bergman} and I.M. {Isaacs}},
    Title = {{Rings with fixed-point-free group actions.}},
    FJournal = {{Proceedings of the London Mathematical Society. Third Series}},
    Journal = {{Proc. Lond. Math. Soc. (3)}},
    ISSN = {0024-6115; 1460-244X/e},
    Volume = {27},
    Pages = {69--87},
    Year = {1973},
    Publisher = {London Mathematical Society, London; Oxford University Press, Oxford},
    DOI = {10.1112/plms/s3-27.1.69},
    MSC2010 = {16N40},
    Zbl = {0234.16005}
}

\bib{bican}{article}{
    Author = {L. {Bican} and P. {Jambor} and T. {Kepka} and P. {Nemec}},
    Title = {{Prime and coprime modules.}},
    FJournal = {{Fundamenta Mathematicae}},
    Journal = {{Fundam. Math.}},
    ISSN = {0016-2736; 1730-6329/e},
    Volume = {107},
    Pages = {33--45},
    Year = {1980},
    Publisher = {Instytut Matematyczny PAN, Warszawa},
    MSC2010 = {16Gxx 16Dxx 16Nxx},
    Zbl = {0354.16013}
}

\bib{Birkhoff}{book}{
    Author = {Garrett {Birkhoff}},
    Title = {{Lattice theory. Corr. repr. of the 1967 3rd ed.}},
    Year = {1979},
    Publisher = {{American Mathematical Society Colloquium Publications, 25. Providence, R.I.: American Mathematical Society (AMS). VI, 418 p. (1979).}},
    MSC2010 = {06-02},
    Zbl = {0505.06001}
}

\bib{BorgesLomp}{article}{
    Author = {In\^es {Borges} and Christian {Lomp}},
    Title = {{Irreducible actions and compressible modules.}},
    FJournal = {{Journal of Algebra and its Applications}},
    Journal = {{J. Algebra Appl.}},
    ISSN = {0219-4988},
    Volume = {10},
    Number = {1},
    Pages = {101--117},
    Year = {2011},
    Publisher = {World Scientific, Singapore},
    DOI = {10.1142/S0219498811004446},
    MSC2010 = {16S50 16S40 16T05 16U20 16W22},
    Zbl = {1228.16025}
}

\bib{cohen85}{article}{
    author={Cohen, {M.}},
    title={{Hopf algebras acting on semiprime algebras}},
    journal={Contemp. Math.},
    volume={43},
    year={1985},
    pages={49-61}
}

\bib{Tamer}{article}{
    AUTHOR = {{\c{S}}. {Ecevit}  and M.T. {Ko{\c{s}}an}},
     TITLE = {On rings all of whose modules are retractable},
   JOURNAL = {Arch. Math. (Brno)},
  FJOURNAL = {Universitatis Masarykianae Brunensis. Facultas Scientiarum
              Naturalium. Archivum Mathematicum},
    VOLUME = {45},
      YEAR = {2009},
    NUMBER = {1},
     PAGES = {71--74},
      ISSN = {0044-8753},
   MRCLASS = {16D10 (16D90)},
  MRNUMBER = {2591662 (2011b:16004)},
MRREVIEWER = {XianKun Du},
}

\bib{HaghanyVedadi}{article}{
    Author = {A. {Haghany} and M.R. {Vedadi}},
    Title = {{Study of semi-projective retractable modules.}},
    FJournal = {{Algebra Colloquium}},
    Journal = {{Algebra Colloq.}},
    ISSN = {1005-3867},
    Volume = {14},
    Number = {3},
    Pages = {489--496},
    Year = {2007},
    Publisher = {Academy of Mathematics \& Systems Science, Chinese Academy of Sciences, Beijing and Suzhou University, Suzhou; World Scientific, Singapore},
    MSC2010 = {16S50 16D40 16E30 16E10},
    Zbl = {1132.16025}
}

\bib{HaghanyKaramzadehVedadi}{article}{
    Author = {A. {Haghany} and O.A.S. {Karamzadeh} and M.R. {Vedadi}},
    Title = {{Ring with all finitely generated modules retractable.}},
    FJournal = {{Bulletin of the Iranian Mathematical Society}},
    Journal = {{Bull. Iran. Math. Soc.}},
    ISSN = {1017-060X; 1735-8515/e},
    Volume = {35},
    Number = {2},
    Pages = {37--45},
    Year = {2009},
    Publisher = {Iranian Mathematical Society, Tehran},
    MSC2010 = {16D80 16E30 16D90 16P20},
    Zbl = {1194.16007}
}

\bib{Kharchenko}{article}{
    Author = {V.K. {Kharchenko}},
    Title = {{Galois extensions and quotient rings.}},
    FJournal = {{Algebra i Logika}},
    Journal = {{Algebra Logika}},
    ISSN = {0373-9252},
    Volume = {13},
    Pages = {460--484},
    Year = {1974},
    Publisher = {Sibirskii Fond Algebry i Logiki, Novosibirsk; Institut Diskretnoi Matematiki i Informatiki, Novosibirsk},
    MSC2010 = {16P50 16W20 16N60 16Rxx},
    Zbl = {0307.16002}
}

\bib{Khuri}{article}{
    AUTHOR = {S.M. {Khuri}},
     TITLE = {The endomorphism ring of a nonsingular retractable module},
   JOURNAL = {East-West J. Math.},
  FJOURNAL = {East-West Journal of Mathematics},
    VOLUME = {2},
      YEAR = {2000},
    NUMBER = {2},
     PAGES = {161--170},
      ISSN = {1513-489X},
   MRCLASS = {16S50 (16D80)},
  MRNUMBER = {1825452 (2002b:16044)},
MRREVIEWER = {J. L. G{\'o}mez Pardo},
}

\bib{Khuri0}{article}{
    AUTHOR = {S.M. {Khuri}},
     TITLE = {Correspondence theorems for modules and their endomorphism
              rings},
   JOURNAL = {J. Algebra},
  FJOURNAL = {Journal of Algebra},
    VOLUME = {122},
      YEAR = {1989},
    NUMBER = {2},
     PAGES = {380--396},
      ISSN = {0021-8693},
     CODEN = {JALGA4},
   MRCLASS = {16A65},
  MRNUMBER = {999081 (90f:16045)},
MRREVIEWER = {J. L. Garc{\'{\i}}a Hern{\'a}ndez},
       DOI = {10.1016/0021-8693(89)90224-X},
       URL = {http://dx.doi.org/10.1016/0021-8693(89)90224-X},
}

\bib{Khuri2}{article}{
    AUTHOR = {S.M. {Khuri}},
     TITLE = {Properties of endomorphism rings of modules and their duals},
   JOURNAL = {Proc. Amer. Math. Soc.},
  FJOURNAL = {Proceedings of the American Mathematical Society},
    VOLUME = {96},
      YEAR = {1986},
    NUMBER = {4},
     PAGES = {553--559},
      ISSN = {0002-9939},
     CODEN = {PAMYAR},
   MRCLASS = {16A08 (16A65)},
  MRNUMBER = {826480 (87e:16008)},
MRREVIEWER = {J. Hausen},
       DOI = {10.2307/2046303},
       URL = {http://dx.doi.org/10.2307/2046303},
}

\bib{Khuri3}{article}{
    AUTHOR = {S.M. {Khuri}},
     TITLE = {Nonsingular retractable modules and their endomorphism rings},
   JOURNAL = {Bull. Austral. Math. Soc.},
  FJOURNAL = {Bulletin of the Australian Mathematical Society},
    VOLUME = {43},
      YEAR = {1991},
    NUMBER = {1},
     PAGES = {63--71},
      ISSN = {0004-9727},
     CODEN = {ALNBAB},
   MRCLASS = {16S50 (16D10)},
  MRNUMBER = {1086718 (92b:16064)},
MRREVIEWER = {Claudi Busqu{\'e}},
       DOI = {10.1017/S000497270002877X},
       URL = {http://dx.doi.org/10.1017/S000497270002877X},
}

\bib{Khuri4}{article}{
    AUTHOR = {S.M. {Khuri}},
     TITLE = {Endomorphism rings of nonsingular modules},
   JOURNAL = {Ann. Sci. Math. Qu\'ebec},
  FJOURNAL = {Les Annales des Sciences Math\'ematiques du Qu\'ebec},
    VOLUME = {4},
      YEAR = {1980},
    NUMBER = {2},
     PAGES = {145--152},
      ISSN = {0707-9109},
   MRCLASS = {16A30 (16A32)},
  MRNUMBER = {599052 (82f:16011)},
MRREVIEWER = {S. S. Page},
}

\bib{Khuri5}{article}{
    AUTHOR = {S.M. {Khuri}},
     TITLE = {Endomorphism rings and lattice isomorphisms},
   JOURNAL = {J. Algebra},
  FJOURNAL = {Journal of Algebra},
    VOLUME = {56},
      YEAR = {1979},
    NUMBER = {2},
     PAGES = {401--408},
      ISSN = {0021-8693},
     CODEN = {JALGA4},
   MRCLASS = {16A32 (16A65)},
  MRNUMBER = {528584 (80h:16014)},
MRREVIEWER = {Lindsay N. Childs},
       DOI = {10.1016/0021-8693(79)90346-6},
       URL = {http://dx.doi.org/10.1016/0021-8693(79)90346-6},
}

\bib{lomp_prime}{article}{
    Author = {C. {Lomp}},
    Title = {{Prime elements in partially ordered groupoids applied to modules and Hopf algebra actions.}},
    FJournal = {{Journal of Algebra and its Applications}},
    Journal = {{J. Algebra Appl.}},
    ISSN = {0219-4988},
    Volume = {4},
    Number = {1},
    Pages = {77--97},
    Year = {2004},
    Publisher = {World Scientific, Singapore},
    DOI = {10.1142/S0219498805001022},
    MSC2010 = {16D80 16N60 16S40 06F05},
    Zbl = {1104.16005}
}

\bib{Lomp_central}{article}{
    AUTHOR = {C. {Lomp}},
     TITLE = {A central closure construction for certain algebra extensions.
              {A}pplications to {H}opf actions},
   JOURNAL = {J. Pure Appl. Algebra},
  FJOURNAL = {Journal of Pure and Applied Algebra},
    VOLUME = {198},
      YEAR = {2005},
    NUMBER = {1-3},
     PAGES = {297--316},
      ISSN = {0022-4049},
     CODEN = {JPAAA2},
   MRCLASS = {16N60 (16W30)},
  MRNUMBER = {2133688 (2005k:16044)},
       DOI = {10.1016/j.jpaa.2004.10.009},
       URL = {http://dx.doi.org/10.1016/j.jpaa.2004.10.009},
}

\bib{Lomp_Survey}{article}{
	author={C. {Lomp}},
	title={On the semiprime smash product question},
	status={to appear},
	publisher={AMS},
	journal={Contemporary Mathematics}
}

\bib{Puczylowski}{article}{
    Author = {E.R. {Puczy{\l}owski}},
    Title = {{On fixed rings of automorphisms.}},
    FJournal = {{Proceedings of the American Mathematical Society}},
    Journal = {{Proc. Am. Math. Soc.}},
    ISSN = {0002-9939; 1088-6826/e},
    Volume = {90},
    Pages = {517--518},
    Year = {1984},
    Publisher = {American Mathematical Society, Providence, RI},
    DOI = {10.2307/2045022},
    MSC2010 = {16W20 16Nxx},
    Zbl = {0534.16031}
}

\bib{Edmund-Agata}{article}{
    Author = {E.R. {Puczy{\l}owski} and Agata {Smoktunowicz}},
    Title = {{On maximal ideals and the Brown-McCoy radical of polynomial rings.}},
    FJournal = {{Communications in Algebra}},
    Journal = {{Commun. Algebra}},
    ISSN = {0092-7872; 1532-4125/e},
    Volume = {26},
    Number = {8},
    Pages = {2473--2482},
    Year = {1998},
    Publisher = {Taylor \& Francis, Philadelphia, PA},
    DOI = {10.1080/00927879808826292},
    MSC2010 = {16N60 16S36 16D25},
    Zbl = {0909.16015}
}
\bib{RodriguesSantana}{article}{
    Author = {Virg\'{\i}nia Silva {Rodrigues} and Alveri Alves {Sant'Ana}},
    Title = {{A note on a problem due to Zelmanowitz.}},
    FJournal = {{Algebra and Discrete Mathematics}},
    Journal = {{Algebra Discrete Math.}},
    ISSN = {1726-3255},
    Volume = {2009},
    Number = {3},
    Pages = {85--93},
    Year = {2009},
    Publisher = {Lugansk Taras Shevchenko National University, Lugansk},
    MSC2010 = {16D80 16D60 16W20 16D40},
    Zbl = {1199.16017}
}

\bib{Rowen}{article}{
    Author = {Louis {Rowen}},
    Title = {{Some results on the center of a ring with polynomial identity.}},
    FJournal = {{Bulletin of the American Mathematical Society}},
    Journal = {{Bull. Am. Math. Soc.}},
    ISSN = {0002-9904; 1936-881X/e},
    Volume = {79},
    Pages = {219--223},
    Year = {1973},
    Publisher = {American Mathematical Society, Providence, RI},
    DOI = {10.1090/S0002-9904-1973-13162-3},
    MSC2010 = {16Rxx 16N60 16D60 16P50},
    Zbl = {0252.16007}
}

\bib{SmithVedadi}{article}{
    Author = {P.F. {Smith} and M.R. {Vedadi}},
    Title = {{Essentially compressible modules and rings.}},
    FJournal = {{Journal of Algebra}},
    Journal = {{J. Algebra}},
    ISSN = {0021-8693},
    Volume = {304},
    Number = {2},
    Pages = {812--831},
    Year = {2006},
    Publisher = {Elsevier Science (Academic Press), San Diego, CA},
    DOI = {10.1016/j.jalgebra.2005.08.018},
    MSC2010 = {16D80 16P60},
    Zbl = {1114.16007}
}

\bib{SmithVedadi2}{article}{
    Author = {P.F. {Smith} and M.R. {Vedadi}},
    Title = {{Submodules of direct sums of compressible modules.}},
    FJournal = {{Communications in Algebra}},
    Journal = {{Commun. Algebra}},
    ISSN = {0092-7872; 1532-4125/e},
    Volume = {36},
    Number = {8},
    Pages = {3042--3049},
    Year = {2008},
    Publisher = {Taylor \& Francis, Philadelphia, PA},
    DOI = {10.1080/00927870802110854},
    MSC2010 = {16D80 16D70 16P60 16D90},
    Zbl = {1155.16004}
}

\bib{TolooeiVedadi}{article}{
    Author = {Y. {Tolooei} and M.R. {Vedadi}},
    Title = {{On rings whose modules have nonzero homomorphisms to nonzero submodules.}},
    FJournal = {{Publicacions Matem\`atiques}},
    Journal = {{Publ. Mat., Barc.}},
    ISSN = {0214-1493; 2014-4350/e},
    Volume = {57},
    Number = {1},
    Pages = {107--122},
    Year = {2013},
    Publisher = {Universitat Aut\`onoma de Barcelona, Departament de Matem\`atiques, Bellaterra, Barcelona},
    MSC2010 = {16D10 16E50 16L30 16A55}
}

\bib{Vedadi}{article}{
	Author={M.R. {Vedadi}},
	Title={{An open problem on semiprime modules}},
	Conference={
		address={Tarbiat Moallem University},
		title={{20th Seminar on Algebra}}	
	},
	Pages={225-227},
	Year={2009}
	
}
\bib{wisbauer96}{book}{
    Author = {R. {Wisbauer}},
    Title = {{Modules and algebras: bimodule structure and group actions on algebras.}},
    Pages = {xiii + 366},
    Year = {1996},
    Publisher = {Harlow, Essex: Addison Wesley Longman Ltd.},
    MSC2010 = {16-02 16D20 16W20 17-02 16S35 16D90 16H05 16N60 16P10 16S90},
    Zbl = {0861.16001}
}

\bib{Zelmanowitz}{article}{
    Author = {J. {Zelmanowitz}},
    Title = {{An extension of the Jacobson density theorem.}},
    FJournal = {{Bulletin of the American Mathematical Society}},
    Journal = {{Bull. Am. Math. Soc.}},
    ISSN = {0002-9904; 1936-881X/e},
    Volume = {82},
    Pages = {551--553},
    Year = {1976},
    Publisher = {American Mathematical Society, Providence, RI},
 
    DOI = {10.1090/S0002-9904-1976-14093-1},
    MSC2010 = {16D60 16S50 16D70 16N60},
    Zbl = {0329.16006}
}
\bib{Zelmanowitz2}{article}{
    Author = {J. {Zelmanowitz}},
    Title = {{Weakly primitive rings.}},
    FJournal = {{Communications in Algebra}},
    Journal = {{Commun. Algebra}},
    ISSN = {0092-7872; 1532-4125/e},
    Volume = {9},
    Pages = {23--45},
    Year = {1981},
    Publisher = {Taylor \& Francis, Philadelphia, PA},
 
    DOI = {10.1080/00927878108822561},
    MSC2010 = {16D60 16S50 16H05 16Kxx 16D80 16D70},
    Zbl = {0469.16004}
}

\bib{Zelmanowitz_closed}{article}{
    AUTHOR = {J.M. {Zelmanowitz}},
     TITLE = {Correspondences of closed submodules},
   JOURNAL = {Proc. Amer. Math. Soc.},
  FJOURNAL = {Proceedings of the American Mathematical Society},
    VOLUME = {124},
      YEAR = {1996},
    NUMBER = {10},
     PAGES = {2955--2960},
      ISSN = {0002-9939},
     CODEN = {PAMYAR},
   MRCLASS = {16D10},
  MRNUMBER = {1342050 (96m:16002)},
MRREVIEWER = {J. L. G{\'o}mez Pardo},
       DOI = {10.1090/S0002-9939-96-03469-7},
       URL = {http://dx.doi.org/10.1090/S0002-9939-96-03469-7},
}
	
\bib{Zhou}{article}{
    AUTHOR = {Z.P. {Zhou}},
     TITLE = {Correspondence theorems for nondegenerate modules and their
              endomorphism rings},
   JOURNAL = {Proc. Amer. Math. Soc.},
  FJOURNAL = {Proceedings of the American Mathematical Society},
    VOLUME = {121},
      YEAR = {1994},
    NUMBER = {1},
     PAGES = {25--32},
      ISSN = {0002-9939},
     CODEN = {PAMYAR},
   MRCLASS = {16S50},
  MRNUMBER = {1211594 (94g:16040)},
MRREVIEWER = {J. L. Garc{\'{\i}}a Hern{\'a}ndez},
       DOI = {10.2307/2160380},
       URL = {http://dx.doi.org/10.2307/2160380},
}

 \end{biblist}
\end{bibdiv}

\end{document}